\newtheorem{thm}{Theorem}
\newtheorem{cl}{Claim}[section]
\newtheorem{lem}[thm]{Lemma}
\newtheorem*{conj}{Conjecture}
\def\qed{\hfill \ifhmode\unskip\nobreak\fi\quad\ifmmode\Box\else$\Box$\fi\\ }
\def\COMMENT#1{}
\let\COMMENT=\footnote
\def\BB{{\mathcal B}}
\def\AA{{\mathcal A}}
\def\FF{{\mathcal F}}
\def\HH{{\mathcal H}}
\def\UU{{\mathcal U}}
\newcommand{\mycomment}[1]{}
\title{Equitable coloring of 
 planar graphs with maximum degree at least eight}
\author{
{{Alexandr Kostochka}}\thanks{
\footnotesize {University of Illinois at Urbana--Champaign, Urbana, IL 61801. E-mail: \texttt {kostochk@math.illinois.edu}.
 Rese arch 
is supported in part by  NSF  Grant DMS-2153507 and by NSF RTG Grant DMS-1937241.
}}
\and
{{Duo Lin}}\thanks{
\footnotesize {University of Illinois at Urbana--Champaign, Urbana, IL 61801. E-mail: \texttt {duolin2@illinois.edu}.
}}
\and{{Zimu Xiang}}\thanks{University of Illinois at Urbana--Champaign, Urbana, IL 61801. E-mail: {\tt zimux2@illinois.edu}. }}
\date{November 10, 2023}
\begin{document}
\maketitle

\begin{abstract}
The Chen-Lih-Wu Conjecture states that each connected graph with maximum degree $\Delta\geq 3$ that is not the complete graph
$K_{\Delta+1}$ or the complete bipartite graph $K_{\Delta,\Delta}$   admits an equitable coloring with $\Delta$ colors. 
For planar graphs, the conjecture has been confirmed for $\Delta\ge 13$ by Yap and Zhang and for $9\leq \Delta\le 12$ by Nakprasit. 
In this paper, we present a proof that confirms the conjecture for graphs embeddable into a surface with non-negative Euler characteristic with maximum degree $\Delta\geq 9$ and for planar graphs with maximum degree $\Delta\geq 8$.
\end{abstract}

\textbf{Keyword}: equitable coloring, planar graphs.

\textbf{Mathematics Subject Classification}: 05C07, 05C10, 05C15.

\section{Introduction}
For a graph $G$, $\Delta(G)$ denotes the maximum degree of $G$.
An \emph{equitable coloring} of a graph is a proper vertex coloring such that for any two color classes $V_i$ and $V_j$, we have that $||V_i|-|V_j||\le 1$. 
A graph $G$ is \emph{equitably $k$-colorable} if it has an equitable coloring with $k$ colors.

The Hajnal-Szemer\'edi Theorem~\cite{HS} states that every graph $G$ is equitably $k$-colorable for any $k\ge\Delta(G) +1$. 
The bound is sharp for complete graphs $K_{\Delta+1}$ and for complete bipartite graphs $K_{\Delta,\Delta}$ when $\Delta$ is odd. 
 Chen, Lih and Wu~\cite{CLW} conjectured the following strengthening of the Hajnal-Szemer\'edi Theorem.

\begin{conj}[\textbf{Chen-Lih-Wu Conjecture~\cite{CLW}}]
If $G$ is an $r$-colorable graph with $\Delta(G)\leq r$, then either $G$ has an equitable $r$-coloring, or $r$ is odd and $K_{r,r}\subseteq G$.
\end{conj}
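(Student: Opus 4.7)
The plan is to attack the conjecture via a minimum counterexample combined with Kempe-chain analysis, in the spirit of the Kierstead--Kostochka short proof of the Hajnal--Szemerédi theorem. Fix $r \ge 3$ and let $G$ be an $r$-colorable graph with $\Delta(G) \le r$ that admits no equitable $r$-coloring and is not a graph containing $K_{r,r}$ with $r$ odd; among all such $G$, choose one minimizing $|V(G)|$ and then $|E(G)|$. The target is a contradiction: either $G$ admits a structural reduction violating minimality, or it must contain $K_{r,r}$ with $r$ odd.

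First I would establish low-degree and low-connectivity reductions. If some $v \in V(G)$ has $\deg(v) < r$, I would delete $v$, apply induction to $G - v$ to obtain an equitable $r$-coloring, and then reinsert $v$ using a color absent from $N(v)$ while rebalancing sizes by a single swap along a well-chosen $\{i,j\}$-Kempe chain. Analogous arguments should handle separating $(r-1)$-cliques and small edge-cuts. Together these steps force $\delta(G) = r$, so $G$ is $r$-regular, and also $r$-edge-connected.

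Next, starting from any proper $r$-coloring $V_1, \ldots, V_r$ of $G$, measure imbalance by $\sum_i \bigl||V_i| - \lceil n/r \rceil \bigr|$. I would attempt to strictly decrease this by Kempe-type cascades: for an oversized class $V_i$ and an undersized class $V_j$, seek $v \in V_i$ and a sequence of $2$-color swaps that net-transfers one vertex from $V_i$ to $V_j$. These cascades are naturally encoded in an auxiliary digraph whose vertices are color classes and whose arcs record ``movability'' witnesses; an oversized-to-undersized directed path yields the desired rebalancing move. If such a move always exists, an inductive descent on the imbalance finishes the proof.

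The main obstacle, and the core of Chen--Lih--Wu, is the \emph{rigidity case}: when no cascade works at all, one has a Hall-type blockage in the movability digraph. The crux is to upgrade this blockage into the structural conclusion ``$r$ odd and $K_{r,r} \subseteq G$''. Concretely, the absence of any movability witness from $V_i$ to $V_j$ forces every vertex of $V_i$ to have a neighbor in $V_j$, and iterating this through the blocking sets should produce a balanced bipartite clique on $r+r$ vertices; the parity constraint ``$r$ odd'' should emerge from a mod-$2$ counting of vertices in each class, tied to the precise equation $|V_i| - |V_j| = 1$. This last step --- extracting a full $K_{r,r}$ from combinatorial rigidity --- is where all current attacks on the full conjecture stall, and it is precisely the reason the present paper restricts to planar graphs with $\Delta \ge 8$ (respectively non-negative Euler characteristic with $\Delta \ge 9$), where planarity supplies extra structural leverage via discharging that substitutes for this missing combinatorial step.
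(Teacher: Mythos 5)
The statement you are addressing is the Chen--Lih--Wu Conjecture itself, which this paper does \emph{not} prove and which remains open in general; the paper only establishes it for planar graphs with $\Delta\geq 8$ and for graphs embeddable in a surface of non-negative Euler characteristic with $\Delta\geq 9$. So there is no proof in the paper to compare against, and your proposal --- as you concede in your own final paragraph --- is not a proof either. Beyond that honest admission, two gaps are genuine and fatal. First, the low-degree reduction does not ``force $\delta(G)=r$.'' After deleting a vertex $v$ with $\deg(v)<r$ and equitably $r$-coloring $G-v$, every color class missed by $N(v)$ may be one of the larger classes, and the promised ``single swap along a well-chosen Kempe chain'' to rebalance need not exist: whether a vertex can be cascaded from such a class into the deficient class is exactly the accessibility question governed by the auxiliary digraph $\HH$ of the paper, and the case where the deficient class is unreachable (the set $\BB$ nonempty) is precisely where all the difficulty lives. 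If this reduction worked, planar graphs --- which always have $\delta\leq 5<r$ --- would be dispatched in a line, yet even with planarity in hand the paper needs the full discharging and case analysis of Sections~3 and~4.

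Second, the ``rigidity case.'' The absence of a movability witness from $V_i$ to $V_j$ forces only that every vertex of $V_i$ has \emph{at least one} neighbor in $V_j$, i.e.\ at least $s$ edges between the two classes (this is~\eqref{ff-1} in the paper); that is very far from forcing a complete bipartite graph between them, and no iteration of the blockage is known to upgrade it to $K_{r,r}$. Your suggestion that the parity condition ``$r$ odd'' should ``emerge from a mod-$2$ counting'' is a hope, not an argument. The paper's actual strategy is orthogonal to this missing step: it plays the forced edge counts of~\eqref{ff-1} against the planar bipartite bound $\|G\|\leq 2n-4$ of Lemma~\ref{pla}(b), together with the solo-vertex and weight-function machinery of Lemma~\ref{nB2}, to derive a contradiction without ever producing a $K_{r,r}$. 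So your outline correctly identifies the shape of the known approaches (edge-minimal counterexample, removal of an edge at a minimum-degree vertex, the movability digraph, maximizing the accessible set), but the two steps on which your descent relies are exactly the ones that are open.
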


Lih and Wu~\cite{lih} proved the conjecture for bipartite graphs. Chen, Lih and Wu~\cite{CLW} themselves proved the conjecture for $r=3$ and for $r\geq |V(G)|/2$. 
Kierstead and Kostochka proved the conjecture in~\cite{KK4} for $r=4$ and  in \cite{KK} for $r>|V(G)|/4$. 
Yap and Zhang~\cite{YZ} proved that the conjecture holds for planar graphs when $r\geq 13$ and 
 Nakprasit~\cite{KN,N9} confirmed the conjecture for planar graphs when $9\leq r\leq 12$. These two results together can be stated
as follows.
\begin{thm}[Yap and Zhang~\cite{YZ}  and Nakprasit~\cite{KN,N9}]\label{YZN}
If $r\geq 9$ and
$G$ is a planar graph with $\Delta(G)\leq r$, then  $G$ has an equitable $r$-coloring.
\end{thm}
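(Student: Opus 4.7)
The plan is to argue by contradiction via the standard minimum-counterexample-plus-discharging framework used throughout the equitable coloring literature. Let $G$ be a planar graph with $\Delta(G)\leq r$ for some fixed $r\geq 9$ that is not equitably $r$-colorable, chosen with $|V(G)|$ minimum and, subject to that, with $|E(G)|$ maximum. A padding argument reduces to the case $n:=|V(G)|\equiv 0\pmod r$ (if not, attach a disjoint component that restores divisibility without destroying planarity or raising the maximum degree); in this situation an equitable $r$-coloring is a partition into classes of size exactly $n/r$, and by minimality every proper subgraph of $G$ admits an equitable $r$-coloring with prescribed class sizes that differ by at most one.

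The heart of the argument is a list of reducible configurations -- local substructures of $G$ that cannot occur. The prototypical move: pick a vertex $v$ of small degree, equitably color $G-v$ by minimality so that one color class $V_i$ has size $n/r-1$ and the others have size $n/r$, and reinsert $v$ into $V_i$ whenever possible. When every such $V_i$ contains a neighbor of $v$, one performs Kempe-type swaps along bichromatic components between pairs of classes to migrate one small-class vertex into $V_i$, freeing a color for $v$. Tracking which vertices can lie in such bichromatic components yields restrictions on the degrees of $v$, its neighbors, and its second neighbors; these translate into forbidden local structures, typically of the form \emph{a vertex of degree at most $d_0$ (for some $d_0\le r-1$) cannot have too many neighbors of degree at most $d_1$, and a face of length three cannot be incident to three low-degree vertices}.

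The last step is a discharging argument on the planar embedding. Assign each vertex $v$ the initial charge $d(v)-4$ and each face $f$ the charge $\ell(f)-4$, so that the total charge is $-8$ by Euler's formula. Design local rules that send charge from high-degree vertices and long faces (positively charged) to low-degree vertices and triangles (negatively charged). If the reducibility list from the previous step is strong enough, the rules can be calibrated so that every element ends with nonnegative charge, contradicting the total $-8$.

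The main obstacle is calibrating the Kempe-swap analysis and the discharging rules tightly enough to close the gap between $r\geq 13$ (Yap--Zhang) and $r=9$. For $r=9$ the slack is tight in both directions: vertices of degree exactly $r$ are the most troublesome in Kempe-swap analyses, since swaps through them easily propagate through the rest of the graph, yet they also appear densely in near-triangulations, so the discharging must distinguish such vertices according to the precise composition of their neighborhoods. I expect the work to go into proving refined reducibility statements about pairs of adjacent large vertices with many common or small neighbors, and into designing discharging rules whose amounts depend on both the degrees of the endpoints and the sizes of the incident faces -- essentially the same difficulties that the present paper must overcome in pushing the bound further to $\Delta\geq 8$.
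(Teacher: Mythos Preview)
Your proposal is an outline of a framework, not a proof. You name the ingredients---minimum counterexample, reducible configurations, Kempe-type swaps, vertex/face discharging with initial charges $d(v)-4$ and $\ell(f)-4$---but supply none of the content that makes such an argument go through: which configurations are reducible, what the discharging rules are, why every element ends nonnegative. You say so yourself (``I expect the work to go into\ldots''). That work \emph{is} the proof; without it nothing has been established, and for $r=9$ the calibration you allude to is genuinely delicate (this is why Nakprasit's original argument is long).

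More to the point, the framework you sketch is not the one this paper uses. The paper never embeds $G$, never looks at faces, and performs no face-based discharging. It takes an \emph{edge}-minimum counterexample, deletes an edge $xy$ with $d(x)=\delta^*(G)\le 5$, equitably colors $G-xy$, and builds an auxiliary digraph $\mathcal{H}$ on the color classes, with an arc $V_iV_j$ whenever some vertex of $V_i$ is movable to $V_j$. The classes from which the short class $V_1$ is reachable form $\mathcal{A}$ of size $a$; since every vertex of $B=\bigcup\mathcal{B}$ has a neighbor in every class of $\mathcal{A}$, the bipartite planar bound $\|H\|\le 2|H|-4$ applied to the edges between $A$ and $B$ gives $abs\le 2rs-2$, which for $r\ge 9$ together with $b\ge r-5$ forces $a\le 2$. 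The two remaining cases are dispatched by a weight function $w(v)=\sum_{u\in N(v)\cap B}1/\|V,u\|$, which averages above $b$ on the relevant class and therefore produces a vertex $v_0$ with many \emph{solo} neighbors in $B$; a short structural lemma (Lemma~\ref{nB2}) then shows that one can recolor so as to enlarge $\mathcal{A}$, contradicting its maximality.

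So all planarity in the paper is consumed through the two crude global bounds $\|G\|\le 3n-6$ and (bipartite) $\|G\|\le 2n-4$, applied to carefully chosen bipartitions of $V(G)$. This is why the argument is short and why it extends verbatim to surfaces of nonnegative Euler characteristic. Your route is closer in spirit to the original Yap--Zhang/Nakprasit proofs; the present paper's purpose is precisely to replace that machinery by the color-class digraph method.
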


 Zhang~\cite{1planar} proved the conjecture for the wider class of $1$-planar graphs (and more generally, for graphs with maximum average degree less than $8$) but with the stronger restriction on $r$: for  $r\geq 17$.


For lower maximum degrees, Chen-Lih-Wu Conjecture was proved for planar graphs with extra restrictions, mainly with restrictions on cycle structure. 
In 2008, Zhu and Bu~\cite{zhu2008equitable} proved that the conjecture holds for $C_3$-free planar graph with maximum degree $\Delta\geq 8$. It also holds for $C_4,C_5$-free planar graphs with maximum degree $\Delta\geq 7$. In 2009, Li and Bu~\cite{li2009equitable} proved that the conjecture holds for $C_4,C_6$-free planar graph with maximum degree $\Delta\geq 6$. In 2012, Nakprasit and Nakprasit~\cite{kknak2012equitable} proved that the conjecture holds for $C_3$-free planar graphs with maximum degree $\Delta\geq 6$, $C_4$-free planar graphs with maximum degree $\Delta\geq 7$, and planar graphs with maximum degree $
\Delta\geq 5$ and girth at least $6$.

The aim of this paper is twofold. First, we present a significantly shorter proof of Theorem~\ref{YZN}. In fact, we prove it for a slightly broader class of graphs
 embeddable into a surface with non-negative Euler characteristic. For simplicity, we call such graphs \emph{semi-planar}. 

\begin{thm}\label{clww}
    If $r\geq 9$ and
$G$ is a semi-planar graph with $\Delta(G)\leq r$, then  $G$ has an equitable $r$-coloring.
\end{thm}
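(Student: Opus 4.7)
The plan is to attack Theorem~\ref{clww} by the standard minimum-counterexample-plus-discharging framework that has driven every result of this type since Yap--Zhang. Fix $r\ge 9$ and let $G$ be a counterexample minimizing $|V(G)|$. By padding with isolated vertices (which preserves semi-planarity) I may assume $|V(G)|=rn$ for some integer $n\ge 1$, so that an equitable $r$-coloring has every class of size exactly $n$. Since $G$ is a minimum counterexample, every proper subgraph obtained by deleting a small set of vertices admits an equitable $r$-coloring of the appropriate size profile; this is the engine of all reducibility.

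The next task is to establish a list of \emph{reducible configurations} in $G$. The basic extension move is the following: if $v$ has degree $\le r-1$ and $G-v$ admits an equitable $r$-coloring in which some class of size $n-1$ avoids $N(v)$, then we are done. When no such class exists, one invokes a Kierstead--Kostochka-style swap: pick a class $V_i$ of size $n$ with $V_i\cap N(v)=\emptyset$, find $u\in V_i$ with few neighbors in a short class $V_j$, and move $u$ to $V_j$ and $v$ to $V_i$. Iterating this swap along a carefully chosen ``short-class digraph'' yields reducibility statements of the form: $G$ contains no vertex of small degree whose second neighborhood is not too densely packed into the high-degree vertices. I would formulate one or two clean reducibility lemmas (for example: no vertex of degree $\le 4$; no edge $uv$ with $d(u)+d(v)$ small; no short face whose boundary vertices all have degree $\le$ some threshold depending on $r$).

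With the reducibility list in hand I would then run a discharging argument on the embedding of $G$ in the surface $S$ of non-negative Euler characteristic $\chi(S)\ge 0$. Euler's formula gives
\[
\sum_{v\in V(G)}(d(v)-6)+\sum_{f\in F(G)}(2\,\ell(f)-6)=-6\,\chi(S)\le 0,
\]
so I assign initial charge $\mu(v)=d(v)-6$ to each vertex and $\mu(f)=2\ell(f)-6$ to each face. I then design discharging rules sending charge from the ``rich'' objects (vertices of degree $\ge 7$ and faces of length $\ge 4$) to the ``poor'' ones (vertices of degree $\le 5$ and triangles), and verify via case analysis on local structure that every object ends up with non-negative charge and some object ends up strictly positive, contradicting the inequality above. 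The absence of the reducible configurations guarantees that each poor object has enough rich neighbors to absorb its deficit; here the hypothesis $r\ge 9$ enters, because high-degree vertices are only available in quantity when $\Delta$ is forced to be reasonably large.

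The main obstacle, and presumably the reason earlier authors needed $r\ge 13$ or a case-by-case handling of $9\le r\le 12$, is calibrating the reducibility lemmas sharply enough so that a single uniform set of discharging rules works down to $r=9$. Concretely, the delicate point is controlling reducibility around a degree-$5$ vertex all of whose neighbors have moderate degree: the naive swap can fail, and one must exploit a ``double swap'' or a second-neighborhood argument to forbid such configurations, then show in discharging that the remaining allowed local structures supply enough charge. Getting these two sides of the argument to meet cleanly, rather than piecing together separate proofs for each $r\in\{9,10,11,12\}$, is where the promised shortening of Theorem~\ref{YZN} must come from.
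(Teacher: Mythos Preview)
Your proposal is only a sketch --- no reducibility lemma is actually stated or proved, and no discharging rule is actually written down --- so as it stands it is not a proof at all but a description of a framework. More to the point, the framework you describe is not the one the paper uses, and it is not clear that it can be made to work.

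The paper does \emph{not} run a discharging argument on the embedding, and faces play no role whatsoever. Instead, the argument is: induct on $\|G\|$, delete an edge $xy$ incident to a minimum-degree vertex $x$, obtain an almost-equitable coloring of $G-x$, and study the auxiliary digraph $\mathcal H$ on the colour classes (with an arc $V_iV_j$ whenever some $v\in V_i$ has no neighbour in $V_j$). One then lets $\mathcal A$ be the set of classes from which the short class $V_1$ is reachable and $\mathcal B$ the rest; the basic edge-count $|E(A,B)|\le 2(rs-1)$ for a bipartite semi-planar graph forces $|\mathcal A|\le 2$ once $r\ge 9$. The remaining two cases $a=1,2$ are dispatched by a weight function $w(v)=\sum_{u\in N(v)\cap B}1/\|V,u\|$ which averages to $bs/|V|$ on each accessible class, locating a vertex $v_0$ with many \emph{solo} neighbours in $B$; a short structural lemma (Lemma~\ref{nB2}) plus a strong-component count in $\mathcal H[\mathcal B]$ then manufacture the extra accessible class that contradicts maximality of $|\mathcal A|$. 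The only appeals to Euler are the crude inequalities $\|G\|\le 3n$ and $\|G\|\le 2n$ for bipartite semi-planar $G$.

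Your proposed route via reducible configurations and vertex/face discharging faces a genuine obstacle you do not address: equitable coloring is a global constraint on class sizes, so ``reducibility'' of a local configuration cannot be certified by a purely local argument. Deleting a degree-$\le 4$ vertex and extending is not automatic --- you must find a class of the right \emph{size} avoiding its neighbourhood, which depends on the whole coloring. The swap mechanism you allude to is exactly what the auxiliary-digraph formalism captures, but once you take that seriously the argument no longer lives on the embedded graph and the face charges become irrelevant. In short, the ``calibration'' difficulty you identify is real, and the paper sidesteps it entirely by abandoning the embedding-discharging paradigm in favour of counting edges between unions of colour classes.
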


Our second goal is to extend Theorem~\ref{YZN} to planar graphs with maximum degree $8$:

\begin{thm}\label{pla8}
  If $r\geq 8$ and
$G$ is a planar graph with $\Delta(G)\leq r$, then  $G$ has an equitable $r$-coloring.
\end{thm}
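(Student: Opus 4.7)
The plan is to reduce to the case $r = 8$ using Theorem~\ref{clww} (which already disposes of $r \ge 9$) and then argue by minimum counterexample. Let $G$ be a planar graph with $\Delta(G) \le 8$ admitting no equitable $8$-coloring, chosen to minimize $|V(G)| + |E(G)|$. I would first collect standard preliminary observations: $G$ is connected; any equitable $8$-coloring of $G$ would have color classes of sizes $\lceil |V(G)|/8\rceil$ and $\lfloor |V(G)|/8\rfloor$; and by minimality, every proper subgraph of $G$ admits an equitable $8$-coloring. These observations give lower bounds on the degrees of neighbors of low-degree vertices, and restrict how low-degree vertices may cluster.

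Next, the bulk of the work would be identifying a list of unavoidable reducible configurations. The general template is: pick a small set $S \subseteq V(G)$ (typically a low-degree vertex together with some of its neighbors), apply minimality to get an equitable $8$-coloring of $G - S$, then extend this coloring to all of $G$ via Kempe-style swaps that redistribute colors along short alternating paths. These arguments are in the spirit of those used by Kierstead--Kostochka~\cite{KK} and Nakprasit~\cite{KN,N9}, but must be sharpened because at $\Delta = r = 8$ the color budget is tight: typically each vertex to be recolored has only one or two admissible colors, so the swap structure must be engineered carefully to preserve equitability.

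With a catalogue of forbidden configurations in hand, I would then invoke Euler's formula for planar graphs in the form $\sum_v (d(v) - 4) + \sum_f (d(f) - 4) = -8$, assigning initial charge $d(v) - 4$ to each vertex and $d(f) - 4$ to each face. Discharging rules would send positive charge from high-degree vertices and large faces toward low-degree vertices and triangular faces, in amounts dictated by the structural bounds implied by reducibility. The absence of the forbidden configurations should force the total final charge to be non-negative, contradicting the Euler sum $-8$, and completing the proof.

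The main obstacle is precisely the tightness at $\Delta = 8$. For the semi-planar regime with $r \ge 9$ treated in Theorem~\ref{clww}, one has an extra color to spare, making local extensions of an equitable coloring much easier, and the weaker bound $|E| \le 3|V|$ available on arbitrary surfaces of non-negative Euler characteristic suffices for discharging. At $r = \Delta = 8$, both the reducibility arguments (which must extend through very constrained neighborhoods) and the discharging (which must now genuinely use the strictly planar bound $|E| \le 3|V| - 6$) have essentially no slack. Thus, the hardest part will be designing a family of reducible configurations rich enough to cover all local structures not ruled out by plain Euler discharging, while simultaneously showing that each really is reducible via a Kempe-exchange argument tailored to the $r = 8$ regime.
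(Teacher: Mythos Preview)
Your outline is not a proof, and more importantly the paradigm it proposes --- local reducible configurations plus vertex/face Euler discharging --- is mismatched with the problem. Equitable coloring is a \emph{global} constraint: when you delete a set $S$ and color $G-S$ equitably, the vertices of $S$ must be placed into whichever classes are currently smallest, not merely into colors absent from their neighborhoods. A Kempe swap along a bichromatic chain exchanges two colors but leaves class sizes unchanged, so it cannot fix an imbalance. Hence the step ``extend the coloring of $G-S$ to $G$ via Kempe-style swaps'' has no mechanism behind it, and face-based discharging never enters the picture.

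The paper's proof works through a different architecture, standard in equitable-coloring arguments since Hajnal--Szemer\'edi. After reducing to $|V(G)|$ divisible by $8$ (Lemma~\ref{equ}), one deletes a single edge $xy$ with $d(x)\le 5$, obtaining an almost-equitable coloring of $G-x$ with one short class $V_1$. The key object is the auxiliary digraph $\mathcal{H}$ on the color classes, where $V_iV_j$ is an arc iff some vertex of $V_i$ can be moved to $V_j$; a directed path in $\mathcal{H}$ to $V_1$ lets one shift vertices along it and absorb $x$. One partitions the classes into accessible $\mathcal{A}$ and inaccessible $\mathcal{B}$; from $d(x)\le 5$ one gets $a=|\mathcal{A}|\le 5$, and a strong-component count (Claim~\ref{44}) rules out $a=4$. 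The proof is then a case analysis over $a\in\{1,2,3,5\}$, driven by (i) weight functions on $A=\bigcup\mathcal{A}$ that locate vertices with many ``solo'' neighbors in $B$, (ii) the key structural Lemma~\ref{nB2} constraining where such solo neighbors can sit relative to $\mathcal{F}_0(v)$, and (iii) the planar bounds $\|H\|\le 3|H|-6$ and, for bipartite $H$, $\|H\|\le 2|H|-4$, applied to edge counts between $A$ and $B$ or between strong components of $\mathcal{H}$. The only discharging that appears (in the subcases $a=3$ and $a=5$) is an edge-to-vertex redistribution used to bound $|E(G)|$, not the vertex/face Euler scheme you describe. The global rebalancing machinery via $\mathcal{H}$ is precisely the missing idea in your plan.
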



The structure of the paper is as follows. In the next section we introduce notation, cite a known lemma and set up the proofs of both theorems. In Section~3 we prove the easier Theorem~\ref{clww}, and in the longer Section~3 we prove  Theorem~\ref{pla8}.

\section{Preliminaries and setup of proofs}

Most notation used in the paper is standard. 
For a graph $G$, let $\Delta(G)$ denote the maximum degree of $G$,  $\delta(G)$ denote the minimum degree of $G$ and  $\delta^*(G)$ denote the minimum degree over non-isolated vertices in $G$.
For a vertex subset $V\subseteq V(G)$ and some vertex $x\in V$ and $u\not\in V$, we use $V-x$ to denote $V\setminus \{x\}$ and $V+u$ to denote $V\cup\{u\}$. 
For an edge $xy\in E(G)$, $G-xy$ denotes the graph obtained by removing $xy$ from $G$. 
For two vertex subsets $X,Y\subseteq V(G)$, we use $E_G(X,Y)$ to denote the set of edges connecting $X$ with $Y$. 

For a graph $G$, $|G|$ denotes the number of vertices of $G$ and $||G||$ denotes the number of edges of $G$.

 Euler's Formula yields the following simple claim.
\begin{lem}\label{pla}
 (a)   For each  planar graph $G$ with $n\geq 3$ vertices,  $\|G\|\leq 3n-6$ and $\delta(G)\leq 5$.  For each  semi-planar graph $G$ with $n\geq 3$ vertices,  $\|G\|\leq 3n$ and $\delta(G)\leq 6$.

(b)    For each bipartite planar graph $G$ with $n\geq 3$ vertices,  $\|G\|\leq 2n-4$ and $\delta(G)\leq 3$.  
 For each bipartite semi-planar graph $G$ with $n\geq 3$ vertices,  $\|G\|\leq 2n$ and $\delta(G)\leq 4$.  
\end{lem}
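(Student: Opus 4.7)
My plan is to derive both parts from Euler's formula in the standard manner, then convert each edge bound into a minimum-degree bound via the handshake identity $\sum_v d(v)=2\|G\|$.

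For part (a), I would embed $G$ in the relevant surface $S$ of Euler characteristic $\chi$ ($\chi=2$ in the planar case, $\chi\ge 0$ in the semi-planar case), passing to a $2$-cell embedding if necessary so that $n-m+f\ge\chi$ holds, where $n=|G|$, $m=\|G\|$, and $f$ is the face count. Since $G$ is simple and $n\ge 3$, each face is bounded by at least three edge-sides and each edge appears in at most two such boundaries, giving $3f\le 2m$. Plugging $f\le 2m/3$ into Euler yields $m\le 3(n-\chi)$, i.e., $\|G\|\le 3n-6$ for planar $G$ and $\|G\|\le 3n$ for semi-planar $G$. The minimum-degree statements follow by contradiction: if $\delta(G)\ge 6$ in the planar case, the handshake identity forces $\|G\|\ge 3n$, contradicting $\|G\|\le 3n-6$; if $\delta(G)\ge 7$ in the semi-planar case, it forces $\|G\|\ge 7n/2>3n$, contradicting $\|G\|\le 3n$.

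Part (b) is identical in structure, with the single change that every face of a bipartite simple graph has length at least $4$, so $4f\le 2m$ and Euler gives $m\le 2(n-\chi)$. This is exactly $\|G\|\le 2n-4$ in the planar case and $\|G\|\le 2n$ in the semi-planar case. The degree bounds follow by the same contradiction argument: $\delta(G)\ge 4$ in the bipartite planar case would give $\|G\|\ge 2n>2n-4$, and $\delta(G)\ge 5$ in the bipartite semi-planar case would give $\|G\|\ge 5n/2>2n$.

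The only step requiring any care is the passage to a $2$-cell embedding used to validate $n-m+f\ge\chi$; this is harmless, since adding edges to disc-ify non-cellular faces can only enlarge $\|G\|$, so the upper bound derived for the enlarged graph also bounds the original. Equivalently, one can invoke the fact that a minimum-genus embedding is automatically cellular. Beyond this bookkeeping the lemma is a textbook consequence of Euler's formula, so I do not expect any substantive obstacle.
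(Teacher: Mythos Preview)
Your proposal is correct and is exactly the approach the paper intends: the paper does not spell out a proof but simply states that ``Euler's Formula yields the following simple claim,'' and your argument is the standard derivation from Euler's formula together with the handshake identity. There is nothing to add.
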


We now show that it is sufficient to only consider graphs of order $rs$ for some integer $s$. 

\begin{lem}\label{equ}
    It is enough to prove Theorems~\ref{clww} and~\ref{pla8} for graphs $F$ with $|F|$ divisible by $r$.
\end{lem}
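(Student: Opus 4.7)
Write $|G|=n=qr+s$ with $1 \le s \le r-1$ (for $s=0$ there is nothing to prove) and set $k := r-s$. The plan is to split according to whether $k$ is small enough that $K_k$ is planar, with a hybrid argument for the one residue left uncovered in the semi-planar setting.

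\textbf{Case $k\le 4$ (i.e.\ $s\ge r-4$):} Form $F = G \sqcup K_k$. Since $K_k$ is planar for $k\le 4$, we may embed it inside a face of any (semi-)planar embedding of $G$, so $F$ inherits (semi-)planarity from $G$; moreover $\Delta(F)\le r$ and $|F|=(q+1)r$. By the hypothesized theorem, $F$ has an equitable $r$-coloring with every class of size $q+1$. The clique $K_k$ forces its $k$ vertices to occupy $k$ distinct classes, so restricting to $V(G)$ leaves $k$ classes of size $q$ and $s$ classes of size $q+1$---an equitable $r$-coloring of $G$.

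\textbf{Case $k\ge 5$ (i.e.\ $s\le r-5$):} Peel off $s$ low-degree vertices iteratively: for $j=0,\dots,s-1$ let $v_{j+1}$ be a minimum-degree vertex of $G_j := G-\{v_1,\dots,v_j\}$; by Lemma~\ref{pla}(a) its degree is at most $5$ (planar) or $6$ (semi-planar). The reduced graph $G_s$ has $qr$ vertices and, by hypothesis, an equitable $r$-coloring with every class of size $q$. Re-add $v_s,v_{s-1},\dots,v_1$ in reverse order; for each $v_i$ choose a color that differs from those of its already-coloured neighbors and from the colors previously chosen for $v_{i+1},\dots,v_s$ (so the $s$ re-added vertices land in $s$ distinct classes). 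The number of forbidden colors is at most $\deg_{G_{i-1}}(v_i)+(s-i)\le s+4$ (planar) or $s+5$ (semi-planar), which is at most $r-1$ provided $s\le r-5$ or $s\le r-6$ respectively, so a valid color is always available; the final coloring of $G$ has $s$ classes of size $q+1$ and $r-s$ of size $q$.

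For planar $G$ with $r\ge 8$ these two cases already cover every residue $s\in\{1,\dots,r-1\}$. For semi-planar $G$ with $r\ge 9$ they cover $s\le r-6$ and $s\ge r-4$, leaving only the single residue $s=r-5$. For that boundary value I would apply a hybrid: delete $s+4$ low-degree vertices from $G$ and then attach a disjoint $K_4$ (still planar, so semi-planarity is preserved), obtaining a graph of order $qr$. Applying the hypothesis and then re-adding the deleted vertices, the added $K_4$ locks four color classes a vertex short, providing exactly the color slack that the pure deletion argument runs out of at its last step. The main obstacle I anticipate is precisely this boundary residue in the semi-planar case: the pure clique trick would require the non-planar $K_5$ and the pure deletion trick exhausts the color budget at the final re-addition, so the hybrid must be verified to leave a free color at every re-addition step---most cleanly by ordering the re-additions so that overlaps between a vertex's neighbor colors and the already-used target colors cover the tight slot.
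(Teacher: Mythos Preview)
Your two-case split and the arguments within each case are the same as the paper's, and both are fine as far as they go. The issue is exactly the one you flag yourself: in the semi-planar setting the residue $s=r-5$ (equivalently deficit $k=5$) is not covered, and your proposed hybrid is left unverified. As written this is a genuine gap.

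The paper closes it without any hybrid construction. The point you are missing is that for a semi-planar graph $G$, if $\delta(G)=6$ then the edge bound $\|G\|\le 3n$ together with $2\|G\|=\sum d(v)\ge 6n$ forces $G$ to be $6$-regular; since $r\ge 8\ge \Delta(G)+1$, the Hajnal--Szemer\'edi Theorem already gives an equitable $r$-coloring. So you may assume the \emph{first} deleted vertex $v_1$ has degree at most $5$ even in the semi-planar case. Because you re-add in reverse order, $v_1$ is the \emph{last} vertex to be colored: it then sees at most $5$ neighbor-colors and $s-1$ previously assigned $\alpha$'s, total $s+4\le r-1$. For every earlier re-addition $v_j$ with $j\ge 2$ you have degree $\le 6$ but only $s-j\le s-2$ previously assigned colors, again total $\le s+4\le r-1$. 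Thus the pure deletion argument already works for all $s\le r-5$, and together with the $K_k$ case ($k\le 4$) covers every residue for both theorems---no hybrid needed.
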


{\bf Proof.}
Suppose the theorem holds for graphs $F$ with $|F|$ divisible by $r$. Let $G$ be a semi-planar (or planar) graph with
$|G|=n=rs-p$, where $0<p<r$. If $1\leq p\leq 4,$
 then set $G'=G+K_{p}$. In this case, $G'$ remains semi-planar (or planar).
 By construction, $|G'|=n+p$ is divisible by $r$ and $\Delta(G')\leq r$. 
 So  $G'$ has an equitable $r$-coloring $f'$. All vertices of the added $K_p$ have different colors in $f'$, and hence
  the restriction of $f'$ to $G$ is an equitable $r$-coloring of $G$. 
  
Suppose now $p\geq 5$.  
By Lemma~\ref{pla}(a), either $G$ is $6$-regular or $G$ has a vertex
$v_1$ of degree at most $5$. In the first case, the theorem follows from the Hajnal-Szemer\' edi Theorem. In the second case,
 we can order the vertices of $G$ as $v_1,\ldots,v_n$ so that for each $2\leq i<n$, $d_{G-\{v_1,\ldots,v_{i-1}\}}(v_i)\leq 6$. Let $G''=G-\{v_1,\ldots,v_{r-p}\}$.
Again, $G''$ is semi-planar (and planar if $G$ is planar) and $|G''|=n-r+p$ is divisible by $r$, so $G''$ has an equitable $r$-coloring $f'$. For $j=r-p,r-p-1,\ldots,1,$ we color
$v_j$ with color $\alpha_j$ distinct from the colors of its colored neighbors and from $\alpha_{j+1},\alpha_{j+2},\ldots,\alpha_{r-p}$. Since $p\geq 5$, for $j\geq 2$, $v_j$ has at most $6$ colored neighbors, and the number of already used $\alpha_i$ is 
$r-p-j\leq r-p-2$, we can find such $\alpha_j$ for each $j\geq 2$. For $j=1$, we have $d(v_1)\leq 5$ and the number 
of already used $\alpha_i$ is 
$r-p-1$. Thus, we get an equitable $r$-coloring of $G$.
\qed

We now describe the common setup for  proofs of both Theorems~\ref{clww} and~\ref{pla8}.
By Lemma~\ref{equ}, it is enough to consider graphs with $n=r s$ vertices for some $s\geq 1$.
We  use induction on $\|G\|$. 
If $G$ has no edges, the claim is trivial. 
So, let $G$ be an edge-minimal $n$-vertex semi-planar (or planar) graph $G$ with $\Delta(G)\leq r$  that  is not equitably $r$-colorable. It may have isolated vertices. Let $V_0$ denote the set of such vertices and $n_0=|V_0|$.
Let $x$ be a vertex of a minimum  degree in $G-V_0$ (we say that $d(x)=\delta^*(G)$) and let $y$ be any neighbor of $x$.
 By Lemma~\ref{pla}(a), 
 either $d(x)\leq 5$ or $\Delta(G)=6$. 
 As in the proof of Lemma~\ref{equ}, if $\Delta(G)=6$, then
we are done by the Hajnal-Szemer\' edi Theorem, so we may assume $d(x)\leq 5$.
 
  By induction hypothesis, $G-xy$ has an equitable $r$-coloring, say $\varphi$. 
  If vertices $x$ and $y$ are in different color classes, then $\varphi$  is also an equitable $r$-coloring of $G$. 
  Thus, we may assume that  the color classes of $G-x$ are $V_1,\dots,V_r$, where $|V_2|=\ldots=|V_r|=s$, $|V_1|=s-1$, and $y\in V_1$.  
  We call such (partial) colorings of $G$ {\em almost equitable}.

Define an auxiliary digraph $\mathcal{H}$ with the vertex set $\{V_1\dots,V_r\}$ where a directed edge $V_{i}V_{j}$ exists if and only if some vertex $v\in V_i$ has no neighbor in $V_j$. 
In order not to mix up vertices and edges in $\HH$ and $G$,
we will call the vertices in $\HH$  {\em classes} and edges 
in $\HH$  {\em  arcs}. 
We say that $v$ {\em witnesses} the  arc $V_{i}V_{j}$, and vertex $v$ is \emph{movable to $V_j$}. A class $V_i$ is {\em reachable} from class $V_j$ if $\HH$ contains a path from $V_j$ to $V_i$.
Naturally, a class $V_i$ is {\em reachable} from a set $\FF$ of classes, if it is reachable from at least one of classes in $\FF$.
Call a class $V_j$ {\em accessible} if $V_1$ is reachable from $V_j$, i.e., $\mathcal{H}$ contains a path from $V_j$ to $V_1$.
 Let $\mathcal{A}$ be the set of accessible classes in $\mathcal{H}$, and $\mathcal{B}$ be the set of classes not in $\mathcal{A}$.
 Among all almost equitable colorings, choose a coloring $\varphi$ with maximum $|\mathcal{A}|$.
 
 Set $a=|\mathcal{A}|$, $b=|\mathcal{B}|$, $A=\bigcup\mathcal{A}$ and $B=\bigcup\mathcal{B}$. Then $a+b=r$. 
 Also for each  $U\in\BB$ and each $V\in \AA$, every $u\in U$ has a neighbor in $V$, and hence
\begin{equation}\label{ff-1}
\mbox{\em for each  $U\in\BB$ and each $V\in \AA$,}\qquad |E_{G-x}(U,V)|\geq |U|=s.
\end{equation}

 By Lemma~\ref{pla}(b) applied to the bipartite graph formed by the edges of $G-x$ connecting $A$ with $B$, this yields
 \begin{align}\label{e(A,B)}
    a\cdot b \cdot s\leq|E_{G-x}(B,A)|\leq 2(|A|+|B|)=2(rs-1).
\end{align}

For distinct classes $X,Y\in\mathcal{A}$, we say $X$ \emph{blocks} $Y$ if $V_1$ is not reachable from $Y$ in $\HH-X$. 
A class in $\AA$ is  \emph{terminal} if it blocks no any other class in $\mathcal{A}$. In particular, if $\AA=\{V_1\}$, then
$V_1$ is terminal.
Let $\AA'$ be the set of terminal classes 
in $\AA$, $A'=\bigcup\mathcal{A}'$ and $a'=|A'|$.

Let $\mathcal{D}(x)$ be the set of classes with no neighbors of $x$. Since $d(x)\leq 5,$ $|\mathcal{D}(x)|\geq r-5$.
If $V_i\in\mathcal{A}\cap \mathcal{D}(x)$, then $\mathcal{H}$ contains a $V_i,V_1$-path, say $V_{i_1},V_{i_2},\ldots,V_{i_t}$, where $i_1=i$ and $i_t=1$. Moving $x$ into $V_i$, and each witness $v_{i_j}$ of $V_{i_j}V_{i_{j+1}}$ to $V_{i_{j+1}}$ along the path yields an equitable $r$-coloring of $G$. So,  $\mathcal{D}(x)\subseteq \mathcal{B}$; in particular 
\begin{equation}\label{br5}
    b=|\mathcal{B}|\geq r-5.
\end{equation}

For an edge $vu\in E_G(A,B)$ with $v\in V\in \AA$ and $u\in B$,  if $N_V(u)=\{v\}$, then we 
 say that $u$ and $v$ are \emph{solo neighbors} of each other, and each of them is a \emph{solo vertex}.

For $v\in A,$ let  $\FF_0(v)$ be the set of classes in $\BB$ that do not have neighbors of $v$.
Call a vertex $u\in V_i\in\mathcal{A}'$ {\em ordinary} if
some $u'\in V_i-u$ is movable to another class in $\AA$ or $a\leq 2$.

For $v\in A,$ let $Q(v)$ denote the set of solo neighbors of $v$ in $B$ and let $q(v)=|Q(v)|$.
Let $Q'(v)$ denote the set of vertices $u\in Q(v)$ that have non-neighbors in $Q(v)-u$ and let $q'(v)=|Q'(v)|$. 
We will use the following fact.

\begin{lem}\label{nB2}
Let $v\in V_i\in\mathcal{A}'$ be an ordinary vertex.
Let $u\in Q'(v)$, say $u\in W_j\in \BB$.

(a)   $|N(v)\cap W_j|\neq 1$.

(b) If $\FF_0(v)\neq \emptyset$, then $W_j$ is not reachable from $\FF_0(v)$.
\end{lem}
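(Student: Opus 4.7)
\begin{proofsketch}
My plan is to prove both parts by contradiction using the same overall strategy: rearrange $\varphi$ into another almost equitable coloring $\varphi'$ whose accessible set $\AA'$ strictly contains $\AA$, violating the maximality in the choice of $\varphi$.

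For (a), supposing $N(v)\cap W_j=\{u\}$, I would swap $v$ and $u$. Since $u\in Q(v)$ gives $N_{V_i}(u)=\{v\}$ and the hypothesis says $N_{W_j}(v)=\{u\}$, the new color classes $V_i':=V_i-v+u$ and $W_j':=W_j-u+v$ are both independent, the class sizes are preserved, and $\varphi'$ is almost equitable. For (b), starting from a path $V_l=U_0\to U_1\to\cdots\to U_t=W_j$ in $\HH$ with $V_l\in\FF_0(v)$, I would move $v$ from $V_i$ into $V_l$ (valid because $V_l\in\FF_0(v)$), cascade each existing witness $w_k$ of the arc $U_k\to U_{k+1}$ one class forward along the path, and finally move $u$ from $W_j$ into $V_i$ (valid because $v$ has just left $V_i$ and $N_{V_i}(u)=\{v\}$). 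The only class outside $\BB$ whose content changes is $V_i$, which again ends up as $V_i'=V_i-v+u$; all sizes balance, so $\varphi'$ is almost equitable.

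To verify $\AA\subseteq\AA'$ I would use the terminality of $V_i$: every class $V_k\in\AA - V_i$ reaches $V_1$ in $\HH - V_i$ via a path forced to lie in $\AA - V_i$, whose slots are unchanged in passing to $\varphi'$, so the same path persists in $\HH'$. That $V_i'\in\AA'$ follows from ordinariness of $v$ in the generic case: some $u'\in V_i - v$ is movable to a class $V_k\in\AA - V_i$, and since $u'\in V_i'$ and $V_k$ is unchanged, the arc $V_i'\to V_k$ persists in $\HH'$ and extends to $V_1$ by the above. The strict increase is then supplied by the partner $u^*\in Q(v)-u$ with $u^*u\notin E(G)$ guaranteed by $u\in Q'(v)$: since $u^*$ is a solo neighbor of $v$ and $u^*u\notin E$, we have $N_{V_i'}(u^*)=\emptyset$, so whichever slot of $\varphi'$ houses $u^*$---originally in $\BB$, and at worst translated one step along the shift path in case (b)---acquires a new arc to $V_i'$ in $\HH'$ and hence lies in $\AA'\setminus\AA$. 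In case (a), the hypothesis $|N(v)\cap W_j|=1$ forces $u^*\notin W_j$, so this slot is genuinely distinct from $W_j$.

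The main difficulty I anticipate is the fallback branch of the ordinary condition, namely the regime $a\leq 2$ in which no vertex of $V_i - v$ is movable to a class in $\AA - V_i$. There $v$ must itself witness the arc $V_i\to V_1$, so $v$ has no neighbor in $V_1$; the vertex $v$ in its new slot ($W_j'$ for (a), $V_l'$ for (b)) then directly witnesses an arc to $V_1$, placing that slot of $\BB$ in $\AA'$. Reconciling this with the possible loss of $V_i$ from $\AA'$, and bookkeeping around $u^*$'s slot to obtain the strict gain $|\AA'|>|\AA|$, appears to need extra analysis exploiting the tight degree constraints forced by this regime (it can only occur when $s$ is large relative to $r$, so the bipartite planar bounds leave little slack).
\end{proofsketch}
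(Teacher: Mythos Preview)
Your generic branch (some $v'\in V_i-v$ movable to another class of $\AA$, or $a=1$ so that $V_i'=V_1'$ is trivially the small class) is correct and matches the paper's argument essentially verbatim: swap/cascade, use terminality to keep $\AA-V_i$ accessible, use $v'$ (or triviality) to keep $V_i'$ accessible, and use the partner $u^*\in Q'(v)$ to make a $\BB$-class accessible. Your treatment of the case where $u^*$'s class lies on the shift path is a bit informal, but it works; the paper tidies this by swapping the roles of $u$ and $u^*$ so that $u^*$'s class is never an internal vertex of the path.

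The real gap is the fallback $a=2$ with $v$ the unique witness of $V_2\to V_1$. Your plan---still perform the swap/cascade and then argue via degree or planarity bounds---does not close. After the move you get the slot now containing $v$ accessible (since $v$ has no $V_1$-neighbor), but you cannot recover $V_2'=V_2-v+u$: every vertex of $V_2-v$ has a $V_1$-neighbor by hypothesis, and $u$, having come from $\BB$, also has one, so there is no arc $V_2'\to V_1$; meanwhile the only other known accessible class is the slot of $v$, and $u^*$ is adjacent to $v$, so $u^*$ does not witness an arc into it either. You are left with $|\AA'|\ge 2$, which does not beat $|\AA|=2$. The appeal to ``tight degree constraints'' and bipartite planar bounds is a red herring; nothing about $s$ or planarity is relevant here.

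The paper's resolution is much simpler than you anticipate and uses a \emph{different} move: do not swap or cascade at all. Just move $v$ from $V_2$ to $V_1$, so that $V_2-v$ becomes the new small class. Then $v$ witnesses $(V_1+v)\to(V_2-v)$, and since both $u$ and $u^*$ are solo neighbors of $v$ in $V_2$, each loses its only $V_2$-neighbor, giving arcs $W_j\to(V_2-v)$ and $W_{j^*}\to(V_2-v)$. That yields at least three accessible classes and the desired contradiction; the same single move disposes of both (a) and (b) simultaneously in this regime.
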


{\bf Proof.} Since $u\in Q'(v)$, there is some $u'\in Q'(v)$ not adjacent to $u$, say $u'\in W_{j'}\in \BB$.

Suppose first that (a) does not hold, i.e., $N(v)\cap W_j= \{u\}$. If some $v'\in V_i-v$ is movable to another class in $\AA$ or $a=1$, then we let
 coloring $\varphi'$ be obtained from $\varphi$ by moving $v$ to $W_j$ and $u$ to $V_i$. Each class in $\AA-V_i$ remains accessible as $V_i$ is a terminal class. And by the case,
the class $V_i-v+u$ is still accessible. Moreover,
now the class $W_j'$ containing $u'$ is also accessible with $u'$ becoming a witness, which contradicts the maximality of $a$. 

If $a=2$ and no $v'\in V_i-v$ is movable to another class in $\AA$, then since $V_i\in \AA'$, $i=2$ and $v$ is the unique vertex in $V_2$ movable to $V_1$. Then we consider
$\varphi''$  obtained from $\varphi$ by moving $v$ to $V_1$.
In this coloring, $V_2-v$ is the small class, and $v$ is a witness that $V_1+v$ is accessible. Moreover, both $W_j$ and $W_{j'}$ are now also accessible. This  contradiction proves (a).

The proof of (b) is similar. Moreover, the case when $a=2$ and no $v'\in V_i-v$ is movable to another class in $\AA$ word by word repeats the previous paragraph. So suppose (b) does not hold and either some $v'\in V_i-v$ is movable to another class in $\AA$ or $a=1$. This means there is
 $W_1\in \FF_0$ and
   $\HH$ contains a directed $W_{1},W_j$-path $P$.
 If $W_{j'}$ is a vertex in    $P$ distinct from $W_j$, then we switch the roles of $u$ and $u'$; thus we assume this is not the case.
  By renaming the classes in $\BB$, we may assume
   $P=W_{1},W_{2},\ldots,W_{\ell}$.
 For $h=1,2,\ldots,\ell-1$, let $u_h$ be a witness for the  arc $W_{h}W_{{h+1}}$.

 Change $\varphi$ as follows. Move $v$ to $W_1$, then for $h=1,2,\ldots,\ell-1$, move $u_h$ from $W_{h}$ to $W_{{h+1}}$, 
 and finally move $u$ to $V_i$.
 Call  the resulting coloring $\psi$. See Figure~\ref{fig:shifting-witnesses}. Each class in $\AA-V_i$ remains accessible as $V_i$ is a terminal class. And by the case,
the class $V_i-v+u$ is still accessible. Moreover, 
if $j'\neq j$ then class $W_{j'}$ is also accessible, and if $j'= j$ then 
 class $W_j-u+u_{\ell-1}$ is  accessible with $u'$ being a witness in both cases.
This proves Lemma~\ref{nB2}.\qed

\begin{figure}[h]
    \centering
 \includegraphics{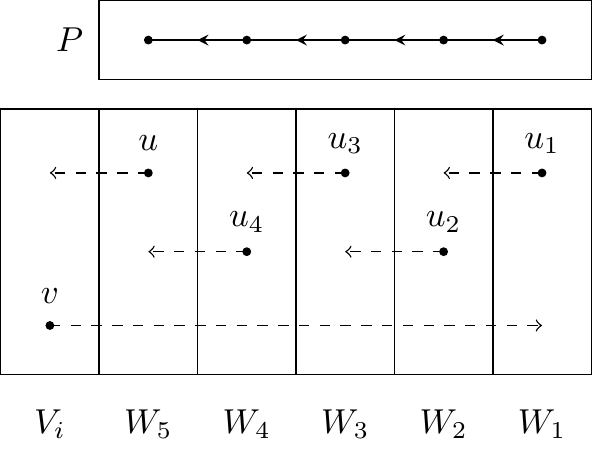}
    \caption{Obtaining $\psi$ in the proof of Lemma~\ref{nB2}(b), with $\ell=5$}
    \label{fig:shifting-witnesses}
    \end{figure}

\medskip
 For an arbitrary class $V\in\mathcal{A}$ and a vertex $u\in B$, let $\|V,u\|$ denote the number of edges incident to $u$ and a vertex in $V$. For each $u\in B$ and $v\in V\in \mathcal{A}$,
define the weights 
\begin{equation}\label{defw}
 w(v,u)=\frac{1}{\|V,u\|}\qquad \mbox{\em and}  \quad w(v)=\sum\nolimits_{uv\in E(G): u\in B}w(v,u).  
\end{equation}

By definition,
\begin{align}\label{r-2}
 \sum\nolimits_{v\in V}w(v)=   \sum\nolimits_{v\in V,u\in B}w(v,u)= |B|=bs. 
\end{align}

\section{Proof of Theorem~\ref{clww}}\label{semi}   
 
 For semi-planar $G$, we provide a bound on $q'(v)$ in terms of $q(v)$.

\begin{cl}\label{cl1}
    If  $q(v)\geq 8$, then $q'(v)\geq 5$. Also, if $q(v)= 7$, then $q'(v)\geq 4$. 
\end{cl}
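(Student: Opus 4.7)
The plan is to bound the edges inside $Q(v)$ via semi-planarity, turn this into a lower bound on the number of non-edges inside $Q(v)$, and then observe that concentrating those non-edges on a small vertex set is obstructed by a simple counting inequality.

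First I would consider the induced subgraph $H=G[\{v\}\cup Q(v)]$. Since every vertex in $Q(v)$ is a solo neighbor of $v$, it is in particular a neighbor of $v$, so $H$ contains all $q(v)$ edges between $v$ and $Q(v)$ together with $e:=\|G[Q(v)]\|$ edges inside $Q(v)$. As a subgraph of a semi-planar graph, $H$ is semi-planar, and $|H|=q(v)+1\geq 8\geq 3$ (or $\geq 3$ when $q(v)=7$), so Lemma~\ref{pla}(a) gives $\|H\|=q(v)+e\leq 3(q(v)+1)$, whence $e\leq 2q(v)+3$.

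Next I would translate this into a lower bound on the number of non-edges in the complement of $G[Q(v)]$: it is at least $\binom{q(v)}{2}-2q(v)-3$, which equals $4$ when $q(v)=7$, equals $9$ when $q(v)=8$, and is increasing in $q(v)$ beyond that. Every such non-edge $uu'$ certifies that both $u\in Q'(v)$ and $u'\in Q'(v)$, so $q'(v)$ equals the number of non-isolated vertices in this complement graph.

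Finally I would use the elementary fact that a graph with $k$ edges has at least $m$ non-isolated vertices, where $m$ is the smallest integer with $\binom{m}{2}\geq k$. For $k=4$, since $\binom{3}{2}=3<4$, we get $m\geq 4$, proving $q'(v)\geq 4$ when $q(v)=7$. For $k=9$, since $\binom{4}{2}=6<9\leq\binom{5}{2}$, we get $m\geq 5$, proving $q'(v)\geq 5$ when $q(v)=8$; and for $q(v)\geq 9$ the same bound (indeed a stronger one) follows because the number of non-edges is even larger. There is no serious obstacle here: the only substantive input is the edge bound $\|H\|\leq 3|H|$ from Lemma~\ref{pla}(a), and the rest is arithmetic.
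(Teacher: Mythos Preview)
Your proof is correct and is essentially the same argument as the paper's: both apply the semi-planar edge bound $\|H\|\le 3|H|$ to $H=G[\{v\}\cup Q(v)]$ and use that every non-edge of $G[Q(v)]$ lies inside $Q'(v)$, so the number of non-edges is at most $\binom{q'(v)}{2}$. The paper phrases this as a contradiction (assuming $q'(v)$ small forces $\|H\|>3|H|$), while you rearrange the same inequality into a direct lower bound on $q'(v)$; the content is identical.
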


{\bf Proof of claim.} Let $q=q(v)$ and $q'=q'(v)$. Consider graph $F=G[Q(v)\cup \{v\}]$. Vertices in $Q'(v)$ are those having degree less than 
$q-1$ in $F$. Then $|E(F)|\geq {q+1\choose 2}-{q'\choose 2}$. So, if $q'\leq 4$ and $q\geq 8$, then
$$|E(F)|\geq \frac{q}{2}(q+1)-{4\choose 2}\geq 4(q+1)-6>3(q+1),
$$
contradicting Lemma~\ref{pla}(a). If $q'\leq 3$ and $q=7$, then similarly
$|E(F)|\geq \frac{7}{2}(q+1)-3= 3.5(q+1)-3>3(q+1),
$ a contradiction again. This proves the claim.





\bigskip
When $r\ge 9$, by~\eqref{br5},  $b\ge r-5\ge 4$; thus $a\le 5$. If $ 3\leq a\leq 5$, then 
$$abs-2(rs-1)= a(r-a)s-2rs+2=(a-2)rs-a^2s+2>0,$$ contradicting~\eqref{e(A,B)}. Thus, $a\le 2$.

\medskip
We now show a helpful property of the auxiliary digraph $\mathcal{H}$. 

\begin{cl}\label{44'}
    The digraph $\mathcal{H}[\BB]$   has a strong component of order at least $r-2$.
\end{cl}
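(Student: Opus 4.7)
The plan is to argue by contradiction: assume every strong component of $\mathcal{H}[\BB]$ has order strictly less than $r-2$. The main tool will be two bipartite planarity inequalities derived from Lemma~\ref{pla}(b). For any non-empty proper subset $Y\subsetneq\BB$ that is closed under outgoing arcs in $\mathcal{H}$ (no arc leaves $Y$), each vertex of $\cup Y$ has at least $a$ neighbors in $\cup\AA$ (since $Y\subseteq\BB$) and at least $b-|Y|$ neighbors in $\cup(\BB\setminus Y)$ (one per class). The semi-planar bipartite graph on $(\cup Y,\,V(G-x)\setminus\cup Y)$ has $rs-1$ vertices, so summing the contributions gives
\[
|Y|\,(r-|Y|)\;\le\;2r-2/s\;\le\;2r-2.
\]
Dually, if $Y$ is closed under incoming arcs (``up-closed''), each vertex of $\cup(\BB\setminus Y)$ has at least $|Y|$ neighbors in $\cup Y$ and at least $a$ in $\cup\AA$, yielding
\[
(b-|Y|)(a+|Y|)\;\le\;2r-2.
\]
Both inequalities apply to up-closed $Y$; an arithmetic check shows that for $r\ge 9$ the feasible integer sizes in the range $(0,b)$ for the joint system are very restricted.

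I will apply this to the condensation of $\mathcal{H}[\BB]$. Each individual source component, as well as the union of all source components, is up-closed, so its size must satisfy both inequalities. In the case $a=2$ (so $b=r-2$), the simultaneous feasible proper sizes are empty, so $\mathcal{H}[\BB]$ has no proper up-closed subset; thus its source is all of $\BB$, meaning $\mathcal{H}[\BB]$ is strongly connected, yielding a single strong component of size $b=r-2$.

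In the case $a=1$ (so $b=r-1$), the joint feasibility leaves only $|Y|\in\{1,\,r-2\}$ for a proper up-closed $Y$. If some source component has size $r-2$, it is itself the required strong component. If instead there are multiple singleton sources summing to $r-2$ (plus one non-source class), then a closed subset of the form $\{\text{non-source}\}\cup\{\text{two sources}\}$ has size $3$, which violates the first inequality for $r\ge 9$ — a contradiction. In the remaining scenario — a unique singleton source $C_1$ — I reduce to the sub-digraph $\mathcal{H}[\BB\setminus C_1]$, of size $r-2$. For any up-closed $Y'$ in this sub-digraph, the source property of $C_1$ in $\mathcal{H}[\BB]$ gives each $v\in\cup(\BB\setminus(C_1\cup Y'))$ an extra neighbor in $\cup C_1$ on top of its $\ge|Y'|$ neighbors in $\cup Y'$ and $\ge 1$ in $\cup\AA$. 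The bipartite bound then becomes $(r-2-|Y'|)(|Y'|+2)\le 2r-2$ and $|Y'|(r-|Y'|)\le 2r-2$, which is exactly the $a=2$ system with an ``effective $a=2$.'' Its joint proper integer solutions in the sub-digraph are again empty, so $\mathcal{H}[\BB\setminus C_1]$ is strongly connected of size $r-2$.

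The main technical obstacle I foresee is the arithmetic verification that, for every $r\ge 9$, the joint feasibility of the two quadratic inequalities has no proper integer solution (for $a=2$) and only the boundary values $\{1,\,r-2\}$ (for $a=1$). This hinges on the discriminant $r^2-8r+8$, and the hypothesis $r\ge 9$ is precisely what makes this work.
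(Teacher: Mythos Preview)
Your derivation of the first inequality $|Y|(r-|Y|)\le 2r-2/s$ explicitly uses that $Y$ is closed under \emph{outgoing} arcs (so that every vertex of $\bigcup Y$ has a neighbor in each class of $\BB\setminus Y$). You then assert that ``both inequalities apply to up-closed $Y$'' and apply the first one to source components. But a source component is closed under \emph{incoming} arcs, not outgoing ones; nothing in your derivation justifies the first inequality for such a set. Without it, your $a=2$ case does not close: with only the second inequality $(r-2-y)(2+y)\le 2r-1$, the values $y=b-1,\,b-2$ survive, and your claim that the feasible set is empty fails. (Separately, $2r-2/s\le 2r-2$ is the wrong direction; integrality only gives you $\le 2r-1$, though this would suffice.)

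The repair is exactly the observation the paper uses: if $\UU$ is \emph{any} proper nonempty union of strong components of $\HH[\BB]$, then for each pair $U_i\in\UU$, $W_j\in\BB\setminus\UU$ at least one of the arcs $U_iW_j$, $W_jU_i$ is absent, and whichever is absent forces $|E_{G-x}(U_i,W_j)|\ge s$. This yields $|\UU|(r-|\UU|)s\le 2(rs-1)$ for every such $\UU$, with no closedness hypothesis, and both of your inequalities then follow by applying this to $\UU$ and to $\BB\setminus\UU$. Once you have this, however, the whole apparatus of two inequalities, source enumeration, and recursion into $\BB\setminus C_1$ is unnecessary: the paper simply notes that if every strong component has order $\le r-3$, then (since $|\BB|\ge r-2\ge 7$) one can assemble a union of strong components of some size $m$ with $3\le m\le r-3$, and then $m(r-m)s\ge 3(r-3)s>2(rs-1)$ for $r\ge 9$ gives the contradiction in one line.
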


\begin{proof} Since $|\BB|\geq r-2\geq 7$, if each strong component of $\mathcal{H}[\BB]$ has at most $r-3$ vertices, then
the union $\mathcal{U}$ of some strong components of $\mathcal{H}$ has at least $3$ and at most $r-3$ vertices.

Suppose $|\UU|=m$.   Then  for every pair  $(U_i,W_j)$ where $U_i\in \UU$ and $W_j\in \BB-\UU$,
    either $U_iW_j$ is not an  arc or $W_jU_i$ is not an  arc in $\mathcal{H}$. 
    By the construction of $\mathcal{H}$, either every vertex in $W_j$ has at least one neighbor in $U_i$, or every vertex in $U_i$ has at least one neighbor in $W_j$. In both cases, $|E_{G-x}(U_i,W_j)|\ge \min\{|W_j|,|U_i|\}=s$. Also by~\eqref{ff-1}, $|E_{G-x}(U_i,A_j)|\ge s$ for each $A_j\in \AA$.

It follows that
denoting $U=\bigcup_{U_i\in \UU} U_i$ and $W=V(G)-U-x$, we have 
$$|E_{G-x}(U,W)| \ge m s (r-m) \geq 3 s(r-3)=2rs+(r-9)s.$$ 
For $r\geq 9$ this is greater than $2(rs-1),$ which contradicts Lemma~\ref{pla}(b) applied to the bipartite graph formed by the edges of $G-x$ connecting $U$ with $W$.
\end{proof}

Now we can prove the theorem. Recall that $1\leq a\leq 2$.


\medskip
{\bf Case 1:} $a=2$.
Let $\mathcal{A}=\{V_1,V_2\}$ and $\mathcal{B}=\{W_1,...,W_{r-2}\}$.
First, we show that
\begin{equation}\label{V1}
   \mbox{\em if some $v\in V_2$ has a solo neighbor in $B$, then $v$ also has a neighbor in $V_1$.}
\end{equation}
Indeed if $v\in V_2$ has a solo neighbor  $u\in W_j\in \mathcal{B}$, 
 then we consider a new coloring $\varphi'$ obtained from $\varphi$ by moving $v$ to $V_1$. The new almost equitable coloring has the small class $V_2-v$, and 
this class is reachable in the corresponding digraph $\mathcal{H}'$ from $V_1+v$ (with a witness $v$) and from $W_j$ (with a witness $u$). This contradiction to the maximality of $\mathcal{A}$ in $\varphi$ proves~\eqref{V1}. 

Since $V_2\in \mathcal{A}$, it contains a vertex $u$ with no neighbors in $V_1$. By~\eqref{V1},
$u$ has no solo neighbors in $B$, and hence $w(u)\leq d(u)/2<r-2$. Since by~\eqref{r-2}, the average weight of vertices in $V_2$ is $r-2$, this implies, that for some $v_0\in V_2$ we have $w(v_0)>r-2$.
By definition, 
$$w(v_0)\leq q(v_0)+\frac{1}{2}(|N(v_0)\cap B|-q(v_0))=\frac{1}{2}|N(v_0)\cap B|+\frac{1}{2}q(v_0)$$
Again by~\eqref{V1}, $v_0$ has a neighbor  in $V_1$ and so 
$|N(v_0)\cap B|\leq r-1$. Hence, in order to have $w(v_0)>r-2$, we need $|N(v_0)\cap B|= r-1$ and $q(v_0)\geq r-2$.

Let $\FF_0=\FF_0(v)$ is the set of classes in $\BB$ that do not have neighbors of $v_0$.  By Lemma~\ref{nB2}(a) and Claim~\ref{cl1}, 
among the $r-1$ neighbors of $v_0$ in $B$, at least $4$ vertices are not unique neighbors of $v_0$ in their color classes.
 It follows that 
\begin{equation}\label{ff0}
|\FF_0|\geq (r-2)-(r-1-\frac{4}{2})= 1.
\end{equation}

 By Claim~\ref{44'}, every color class in $\BB$ is reachable from $\FF_0(v)$.
 But there is some $u\in Q'(v_0)\cap W$ where $W\in\BB$, and with $W$ reachable from $\FF_0(v)$, we have a contradiction to Lemma~\ref{nB2}(b).

     \medskip
{\bf Case 2:} $a=1$.
 This case is similar to Case 1, but more complicated.
We may assume $\mathcal{A}=\{V_1\}$ and $\mathcal{B}=\{W_1,...,W_{r-1}\}$. 
Since $|V_1|=s-1$, by~\eqref{r-2}, the average weight of a vertex in $V_1$ is $\frac{(r-1)s}{s-1}>r-1$. Fix a vertex $v_0\in V_1$
with $w(v_0)>r-1$. For this we need $d(v_0)=r$ and $q(v_0)\geq r-1$. By  Claim~\ref{cl1},  $q'(v_0)\geq 5$.

Recall $\FF_0$ 
 as in Case 1.
 By Lemma~\ref{nB2}(a) and Claim~\ref{cl1}, 
among the $r$ neighbors of $v_0$ in $B$, at least $5$ vertices are not unique neighbors of $v_0$ in their color classes.
So, similarly to~\eqref{ff0}, we get
  \begin{equation}\label{ff2}
|\FF_0|\geq (r-1)-(r-\bigg\lceil\frac{5}{2}\bigg\rceil)= 2.
\end{equation}
 By Claim~\ref{44'} and~\eqref{ff2},  we have the following cases.

\medskip
{\bf Case 2.1:} Every color class in $\BB$ is reachable from $\FF_0$.
There is some $u\in Q'(v_0)\cap W$ where $W\in\BB$, and with $W$ reachable from $\FF_0$, we have a contradiction to Lemma~\ref{nB2}(b).

\medskip
{\bf Case 2.2:} Exactly one color class in $\BB$, say $W_{r-1}$ is not reachable from $\FF_0$.
If there is $u\in Q'(v_0)\cap W$ where $W\in\BB\setminus\{W_{r-1}\}$, then we again have a
contradiction to Lemma~\ref{nB2}(b).
So, assume $Q'(v_0)\subseteq W_{r-1}$. Consider the following new weight function $w'$.

For each $u\in B-W_{r-1}$ and $v\in V_1$,define the weight $w'(v,u)=w(v,u)=\frac{1}{\|V_1,u\|}$, but for $u\in W_{r-1}$ and $v\in V_1$ we let $w'(v,u)=\frac{1}{2}w(v,u)=\frac{1}{2\|V_1,u\|}$. Then for each $v\in V_1$, define
$$w'(v)=\sum\nolimits_{uv\in E(G): u\in B}w'(v,u).$$



By definition,
\begin{align}\label{r-1}
 \sum\nolimits_{v\in V_1}w'(v)=   \sum\nolimits_{v\in V_1,u\in B}w'(v,u)= (r-1.5)s. 
\end{align} 
Since $|V_1|=s-1$,  the average new weight of a vertex in $V_1$ is $\frac{(r-1)s}{s-1}>r-1.5$. Fix a vertex $v'\in V_1$
with $w'(v')>r-1.5$. Since $Q'(v_0)\subseteq W_{r-1}$ and $q'(v_0)\geq 5$, we have $w'(v_0)\leq (q(v_0)-q'(v_0))+\frac{1}{2}(r-q(v_0)+q'(v_0))\leq r-2.5.$ Thus $v'\neq v_0$.
Since $a=1$, $v'$ is  ordinary.

By Lemma~\ref{44'}, we may assume the following:
 \begin{equation}\label{nw1}
\mbox{\em For all $W_i,W_j$ such that $1\leq i,j\leq r-2$, $\HH$ has a $W_i,W_j$-path.
}
\end{equation}

Let $\hat{Q}(v')=Q(v')-W_{r-1}$. If $|\hat{Q}(v')|=m\leq r-3$, then $w'(v')\leq m+\frac{1}{2}(r-m)\leq r-3+\frac{3}{2},$ a contradiction. Thus $|\hat{Q}(v')|\geq r-2\geq 7$. So, by  Claim~\ref{cl1},  $|Q'(v')|\geq 4$. Since 
by Lemma~\ref{nB2}(a), 
among the $r$ neighbors of $v'$ in $B$, at least $4$ vertices are not unique neighbors of $v'$ in their color classes, similar to~\eqref{ff0},
$|\FF_0(v')|\ge 1$.
Choose a smallest $m$ such that $W_m\in \FF_0(v')$.

\medskip
{\em Case 2.2.1:}  $1\leq m\leq r-2$. 
If some $4$ vertices in $Q'(v')$ are in $W_{r-1}$, then
$w'(v')\leq r-4(1/2)=r-2$, a contradiction. Thus some $u\in Q'(v')$ is not in $W_{r-1}$. 
Say $u\in W_j\in \BB-\FF_0(v')$.
Then by Lemma~\ref{nB2}(b), $W_j$ is not reachable from $W_m$, but this is a contradiction to~\eqref{nw1}.

\medskip
{\em Case 2.2.2:} $m=r-1$. By the minimality of $m$ and by Lemma~\ref{nB2}(a), in this case $q'(v')=4$, and these four vertices are in exactly two color classes. Since $q(v')\geq r-2\geq 7$, there is $z_0\in Q(v')-Q'(v')$. This $z_0$ is adjacent to $v'$ and to at least $r-3$ vertices in $Q(v')$, and hence has at most $2$ neighbors in $W_{r-1}$. Recall that at least $5$ vertices in $Q'(v_0)$, say
$z_1,\ldots,z_5$, are in $W_{r-1}$. So, we may assume that $z_0$ is not adjacent to $z_1,z_2$ and $z_3$.

By~\eqref{ff2}, we may assume that $v_0$ has no neighbors in $W_1$. Let $W(z_0)$ be the class containing $z_0$.
By~\eqref{nw1}, $\HH$ has a $W_1,W(z_0)$-path, say $W_1,W_2,\ldots,W_\ell$, where $W_\ell=W(z_0)$.
For $j=1,2,\ldots,\ell-1$, let $u_j$ be a witness for the  arc  $W_{j}W_{{j+1}}$.

Consider a new coloring $\varphi'$ obtained as follows. Move $v'$ to $W_{r-1}$, then $z_1$ to $V_1-v'$, then $v_0$ to $W_1$,  then for $j=1,2,\ldots,\ell-1$, move $u_j$ from $W_{j}$ to $W_{{j+1}}$, 
 and finally move $z_0$ to $V_1$. Since $z_0z_1\notin E(G)$ and $v'$ has no neighbors in $W_{r-1}$,
 $\varphi'$ is an almost equitable coloring of $G-x$. But now the class $W_{r-1}-z_1+v'$ is accessible with a witness $z_2$, contradicting the maximality of $a$.\qed

\section{Proof of Theorem~\ref{pla8}}

By Theorem~\ref{clww}, it is enough to consider the case $r=8$.
Since $G$ is planar, we can give a better bound on $q'(v)$ in terms of $q(v)$.

\begin{cl}\label{q'}
    If $q(v)\ge 5$, then $q'(v)\ge q(v)-1$.
\end{cl}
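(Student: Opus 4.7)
The plan is to argue by contradiction, exhibiting a $K_{3,3}$ subgraph of $G$ whenever $q'(v) \le q(v)-2$. Suppose then that there exist two distinct vertices $u_1, u_2 \in Q(v)\setminus Q'(v)$. By the definition of $Q'(v)$, each $u_i$ is adjacent in $G$ to every vertex of $Q(v)-u_i$, and each $u_i$ is adjacent to $v$ since $u_i\in Q(v)\subseteq N(v)$. The key observation to isolate is therefore that each of the three vertices $v,u_1,u_2$ is adjacent to every vertex of $Q(v)\setminus\{u_1,u_2\}$, a set of size $q(v)-2\ge 3$ by hypothesis. Choosing any three vertices $w_1,w_2,w_3$ from $Q(v)\setminus\{u_1,u_2\}$ then produces $\{v,u_1,u_2\}$ and $\{w_1,w_2,w_3\}$ as the two sides of a $K_{3,3}$ subgraph of $G$, contradicting planarity.

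A point worth flagging is why the edge-counting strategy used in Claim~\ref{cl1} cannot simply be re-run with the sharper planar bound. If one sets $F=G[Q(v)\cup\{v\}]$, observes that all missing edges of $G[Q(v)]$ lie inside $Q'(v)$, and applies $\|F\|\le 3|F|-6$, the resulting inequality only forces $q'(v)\ge q(v)-2$, and in fact becomes an equality at $q'=q-2$, so no contradiction arises from edge counts alone. The extra strength comes from the $K_{3,3}$-freeness of planar graphs, which is precisely what the structural argument above exploits. I do not anticipate a substantive obstacle here beyond noticing that two ``non-$Q'$'' vertices together with $v$ force the complete bipartite join to $Q(v)\setminus\{u_1,u_2\}$; once that is in view the contradiction is immediate for $q(v)\ge 5$.
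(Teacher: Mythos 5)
Your proof is correct and is essentially the same as the paper's: both take two solo neighbors $u_1,u_2$ outside $Q'(v)$, note that $v,u_1,u_2$ are each joined to all of $Q(v)\setminus\{u_1,u_2\}$, and extract a $K_{3,3}$ (the paper phrases it as $K_{3,q(v)-2}$) contradicting planarity. Your aside about why pure edge-counting only yields $q'(v)\ge q(v)-2$ is a nice observation but not part of the argument.
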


\begin{proof}
    Assume that $q'(v)\le q(v)-2$. Then there are two solo neighbors $u_1,u_2$ of $v$  adjacent to all other vertices in $Q(v)$.
    In particular, $G$ contains $K_{3,q(v)-2}$ with parts $\{v,u_1,u_2\}$ and $Q(v)-\{u_1,u_2\}$, a contradiction to planarity of $G$.
\end{proof}  

\medskip
We now prove an analogue of Claim~\ref{44'} on strong components of $\HH$.

\begin{cl}\label{44}
   Suppose $a=|\mathcal{A}|\le 4$.

   (i) No union of some strong components of $\HH$ has exactly $4$ vertices.

   (ii) Digraph $\mathcal{H}$  either has a strong component of size at least $5$, or has two strong components of size $3$ and one strong component of size $2$. 
\end{cl}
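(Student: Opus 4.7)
My plan is to adapt the proof of Claim~\ref{44'} to the planar setting, using the sharper planar bipartite bound $\|H\|\le 2|V(H)|-4$ from Lemma~\ref{pla}(b) in place of the semi-planar $\|H\|\le 2|V(H)|$. Part (i) will follow from an edge count, and part (ii) will then follow by a short enumeration of partitions of $8$.

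For (i), suppose for contradiction that $\mathcal{U}$ is a union of strong components of $\mathcal{H}$ with $|\mathcal{U}|=4$. Set $U=\bigcup_{U_i\in\mathcal{U}}U_i$ and let $H$ be the bipartite subgraph of $G-x$ between $U$ and $V(G)-U-x$. Since $|V(H)|=8s-1$, Lemma~\ref{pla}(b) gives $\|H\|\le 16s-6$. For the matching lower bound, I would show that each of the $m(r-m)=16$ ordered pairs $(U_i,V_j)$ with $U_i\in\mathcal{U}$ and $V_j\notin\mathcal{U}$ contributes at least $s$ edges to $H$, except for at most $a\le 4$ exceptional pairs that contribute only $\ge s-1$. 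The generic $\ge s$ bound comes from two mechanisms: since $U_i$ and $V_j$ lie in different strong components of $\mathcal{H}$, at least one of the arcs $U_iV_j$, $V_jU_i$ is absent, which yields $|E_{G-x}(U_i,V_j)|\ge\min(|U_i|,|V_j|)$; and when one of the two classes lies in $\mathcal{B}$ and the other in $\mathcal{A}$, inequality~(\ref{ff-1}) directly gives $\ge s$. A shortfall of $1$ arises only when both classes lie in $\mathcal{A}$ and one of them equals $V_1$, and the number of such pairs is at most $a\le 4$ (more precisely, $a-c$ if $V_1\in\mathcal{U}$ and $c$ otherwise, where $c=|\mathcal{A}\cap\mathcal{U}|$). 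Thus $\|H\|\ge 16s-4$, contradicting $\|H\|\le 16s-6$.

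For (ii), if some strong component has size at least $5$ we are done. Otherwise every component has size in $\{1,2,3,4\}$; applying (i) to single components rules out size $4$, and applying (i) to arbitrary unions forbids any sub-multiset of component sizes from summing to $4$. Enumerating the partitions of $8$ into parts from $\{1,2,3\}$, every such partition except $3+3+2$ contains a forbidden sub-multiset (for example $3+1$, $2+2$, $2+1+1$, or $1+1+1+1$), so the components must have sizes $3,3,2$.

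The main obstacle will be the bookkeeping in part (i): one must carefully split into cases based on whether $V_1\in\mathcal{U}$, verify that every pair outside the ``$V_1$ paired with an $\mathcal{A}$-class'' case really does contribute $\ge s$ edges (using~(\ref{ff-1}) whenever an $\mathcal{A}$ and a $\mathcal{B}$ class are paired), and then confirm that the count of $V_1$-involving pairs is bounded by $a$. Once this accounting is in place, the contradiction $16s-4>16s-6$ is immediate, and the partition enumeration in (ii) is a quick case-check.
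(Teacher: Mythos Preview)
Your proposal is correct and follows essentially the same approach as the paper: for (i), an edge count between a $4$-class union $\mathcal{U}$ and its complement yields $\ge 16s-4$ edges, contradicting the planar bipartite bound $2(8s-1)-4=16s-6$; for (ii), the same partition enumeration. The only difference is that the paper shortcuts your bookkeeping in (i): since $|\mathcal{U}|=|\mathcal{W}|=4$, one may swap $\mathcal{U}$ and $\mathcal{W}$ so that without loss of generality $V_1\in\mathcal{U}$, after which the $\min(|U_i|,|W_j|)$ bound alone gives $4(s-1)+12s=16s-4$ directly, with no need to invoke~(\ref{ff-1}) or split on the location of $V_1$.
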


\begin{proof} Suppose (i) does not hold, and
the union of some strong components of $\mathcal{H}$ consists of exactly  $4$ classes, say this union is $\mathcal{U}=\{U_1,U_2,U_3,U_4\}$. Let  $\mathcal{W}=V(\HH)-\mathcal{U}=\{W_1,W_2,W_3,W_4\}.$
    Then as in the proof of  Claim~\ref{44'}, $|E_G(U_j,W_i)|\ge \min\{|W_i|,|U_j|\}$. Without loss of generality, assume that $|U_1|=|V_1|=s-1$.
Denoting $U=\bigcup_{i=1}^4 U_i$ and $W=\bigcup_{j=1}^4W_j$, we have 
$$|E_G(U,W)|=|E_G(U_1,W)|+\sum_{i=2}^4|E_G(U_i,W)|\ge 4(s-1)+12s=16s-4>2(8s-1)-4,$$ contradicting Lemma~\ref{pla}(b). Thus (i) holds.

 Let the sizes of the strong components of $\HH$ be $a_1,\ldots,a_m$ and $a_1\geq a_2\geq\ldots\geq a_m$. 
Then $a_1+\ldots+a_m = 8$.
If (ii) does not hold, then $a_1\leq 4$. Moreover, by (i), $a_1\leq 3$ and no sums of several $a_i$ equal to $4$.
This is possible only if $a_1=a_2=3$ and $a_3=2$.
\end{proof}

Notice that by the way we define $\AA$ and $\BB$, each strong component in $\HH$ should be contained in either $\AA$ or $\BB$.

\medskip
With $r=8$, by~\eqref{br5} we have  $b\ge 3$.
So $a=r-b\le 5$.
By Claim~\ref{44},  $a=4$ would lead to a contradiction. Thus it suffices to consider the cases when $a=1,2,3$ and $5$.






\subsection{Proof of the case $a=1$}


Recall the weight functions $w(v,u)$ and $w(v)$ defined by~\eqref{defw}.
By~\eqref{r-2}, with $b=r-a=7$ and $|A|=|V_1|=s-1$, there is some $v_0\in V_1$ with $d(v_0)\ge w(v_0)\ge 7s/(s-1)>7$. Thus $d(v_0)=8$.
Note that $N(v_0)\subseteq B$. If $q(v_0)\le 6$, then $w(v_0)\le q(v_0)+(d(v_0)-q(v_0))/2=4+q(v_0)/2\le 7$, a contradiction, so $q(v_0)\ge 7$ and $q'(v_0)\ge 6$.

Let $\FF_0$ denote the set of classes in $\BB$ that do not have neighbors of $v_0$, $\mathcal{F}$ denote the set of classes reachable in $\HH$ from $\FF_0$, $f=|\FF|$ and  $F=\bigcup \FF$. 
Notice that every color class $V_i$ is trivially reachable from itself in $\HH$, so $\FF_0\subseteq\FF$.
By Lemma~\ref{nB2}(a) with $q'(v_0)\ge 6$, at least $6$ vertices in $N(v_0)$ are not unique neighbors of $v_0$ in their color classes. It follows that 
\begin{equation}\label{f7}
    7\ge f\ge |\mathcal{F}_0|\ge (r-1)-(r-\frac{6}{2})= 2.
\end{equation}

{\bf Case 1.1:} $f=2$, say $\FF=\{V_2,V_3\}$. In this case, by~\eqref{f7}, $\mathcal{F}=\mathcal{F}_0$. Then  $q'(v_0)=6$ and there are three classes $V_6,V_7,V_8$ such that $Q'(v_0)=N(v_0)\cap (V_6\cup V_7\cup V_8)$. Specifically, by Lemma~\ref{nB2}(a), we get $|N(v_0)\cap V_i|=|Q'(v_0)\cap V_i|=2$ for $i\in\{6,7,8\}$. Since $q(v_0)\ge 7>q'(v_0)$,  some vertex $v'\in Q(v_0)$ is adjacent to all of $Q'(v_0)$.

Let $N(v_0)\cap V_8=\{w,w'\}$. Consider the coloring $\varphi''$ of $G-x$ obtained from $\varphi$ by moving $v_0$ into $V_8$ 
and moving $w$ and $w'$ into $V_1-v_0$. Denote $V_1'=(V_1-v_0)\cup \{w,w'\}$ and $V_8'=(V_8- \{w,w'\})\cup\{v_0\}$.
If $x$ is not adjacent to $V'_8$, then we extend $\varphi''$ to $G$ by moving $x$ into $V'_8$. 
This extension is an equitable coloring of $G$ as $|V'_1|=|V'_8\cup\{x\}|=s$ while other color classes remain unchanged.
Thus we may assume that $x$ has a neighbor $y'$ in $V'_8$. 

Note that $\varphi''$ is an almost equitable coloring of $G-x$ with the small class $V'_8$.
 By the maximality of $a$, every vertex in $V(G)-V'_8$ has a neighbor in $V'_8$.
Thus 
\begin{equation}\label{v6}
    |E_G(V_8', V_i)|\ge |V_i|=s\qquad \mbox{for all $i\in [7]-\{1\}$}.
\end{equation} 

Now we count the edges between $X=V_1\cup V_8\cup F$ and $Y=V(G)-x-X=V_4\cup V_5\cup V_6\cup V_7 $. 
Since $a=1$ and $f=2$, for color classes $F_i\in \FF$ and $B_j\in \BB\setminus\FF$, there is no edge of the form $F_iB_j$ or $B_jV_1$ in $\HH$. Thus 
\begin{equation}\label{12s}
    |E_G(V_1\cup F, Y)|\ge |E_G(V_1,Y)|+|E_G(F,Y)|\ge 4s+8s=12s.
\end{equation}

Further notice that 
\begin{equation}\label{ge6}
    |E_G(V_8', Y)\cap E_G(V_1,Y)|=|E_G(v_0,N(v)-\{w,w'\})|=6,
\end{equation}
and that $$|E_G(v',\{w,w'\})\cap(E_G(V_8', Y)\cup E_G(V_1,Y))|=0.$$
Thus by~\eqref{v6}, we get $$|E_G(V_8,Y)|\ge|E_G(V_8', Y)|-|E_G(v_0,N(v)-\{w,w'\})|+|E_G(v',\{w,w'\})|\ge 4s-6+2=4s-4.$$
Combining this with~\eqref{12s}, we obtain $$|E_G(X,Y)|\ge 12s+4s-4=16s-4>2(8s-1)-4,$$ a contradiction to Lemma~\ref{pla}(b).

\medskip
{\bf Case 1.2:} $f\in \{3,4\}$. In this case we do not have a  strong  component of size at least $5$ in $\HH$, and $V_1$ forms a  strong  component of size $1$ by itself. Then we have a contradiction to Claim~\ref{44}.

\medskip
{\bf Case 1.3:} $f=5$. Let $\BB-\FF=\{V_2,V_3\}$ and $C=V_2\cup V_3$. Similarly to Case 2.2 in Section~\ref{semi}, we have $Q'(v_0)\subseteq C$. Consider the following new weight function $w'$.

For each $u\in B\setminus C=F$ and $v\in V_1$, define  $w'(v,u)=w(v,u)=\frac{1}{||V_1,u||}$, but for $u\in C$ and $v\in V_1$, let $w'(v,u)=\frac{1}{2}w(v,u)=\frac{1}{2||V_1,u||}$. For each $v\in V_1$, define 
$$w'(v)=\sum\nolimits_{uv\in E(G):u\in B}w'(v,u).$$
By definition, $\sum_{v\in V_1}w'(v)=\sum_{v\in V_1,u\in B}w'(v,u)=6s$.

Since $|V_1|=s-1$, the average new weight of a vertex in $V_1$ is $6s/(s-1)>6$. Pick a vertex $v'\in V_1$ with $w'(v')>6$. 
Let $Q_1(v')=Q(v')\cap F$ and $q_1(v')=|Q_1(v')|$. 
By definition, for $u\in Q(v')-Q_1(v')$, $w'(v',u)\le\frac{1}{2}$. 
Thus  $$6<w'(v')\le q_1(v')+\frac{1}{2}(8-q_1(v'))=4+\frac{q_1(v')}{2},$$ so $q_1(v')\ge 5$.
Denote by $Q_1'(v')$ the set of vertices $u\in Q_1(v')$ that have non-neighbors in $Q_1(v')-u$ and $q'_1(v')=|Q_1'(v')|$.

\medskip
{\em Case 1.3.1:} $|N(v')\cap C|\ge 1$.  
Suppose first that every class in $\FF$ has a neighbor of $v'$. Let $\FF'(v')$ denote the set of classes in $\FF$ that contain  vertices in $Q'_1(v')$ and no other neighbors of $v'$.
Since $q(v')\geq q_1(v')\geq 5$, repeating the argument of  Claim~\ref{q'}, we get
$q'_1(v')\geq q_1(v')-1$. So since $|N(v')\cap F|\le 7$, $|\FF'(v')|\geq 2$. By Lemma~\ref{nB2}(a), each class in $\FF'(v')$ has at least two vertices
from $Q'_1(v')$.
If each of them has at least $3$ such vertices, then $N(v')\cap F$ has at least $3|\FF'(v')|+(5-|\FF'(v')|)=2|\FF'(v')|+5$ vertices.
But this contradicts the fact that $|N(v')\cap F|\le 7$ and $|\FF'(v')|\geq 2$.
Thus,
some color class $V_8\subseteq F$ satisfies
$|V_8\cap Q_1'(v')|=|V_8\cap N(v')|=2$, say $V_8\cap Q_1'(v')=\{z,z'\}$. 

Similarly to Case 1.1, we consider a coloring $\varphi''$ of $G-x$ obtained from $\varphi$ by moving $v'$ into $V_8$ 
and moving $z$ and $z'$ into $V_1-v$. Denote $V_1'=(V_1-v')\cup \{z,z'\}$ and $V_8'=(V_8- \{z,z'\})\cup\{v'\}$.
As in Case 1.1, $\varphi''$ is an  equitable coloring of $G-x$ with the small class $V'_8$.
 By the maximality of $a$, every vertex in $V(G)-V'_8$ has a neighbor in $V'_8$.
Thus~\eqref{v6} holds again.

Now we count the edges between $X=V_1\cup V_8\cup C$ and $Y=V(G)-x-X=V_4\cup V_5\cup V_6\cup V_7 $. 
Similarly to~\eqref{12s}, we get
$$|E_G(V_1\cup C, Y)|\ge 12s.$$

As $v'$ has a neighbor in $C$, $|N(v')\cap F|\le 7$.
So similarly to~\eqref{ge6}, we have 
$$|E_G(V_8', Y)\cap E_G(V_1,Y)|=|E_G(v',(N(v')\cap F)-\{z,z'\})|\le 5.$$ Hence 
$$|E_G(V_8,Y)|\ge |E_G(V_8', Y)-E_G(V_1,Y)|\ge 4s-5.$$
Therefore,
$$|E_G(V_1\cup C\cup V_8,Y)|\ge 16s-5>2(8s-1)-4,$$  a contradiction to Lemma~\ref{pla}(b).

Thus, we may assume that
 some class $U\in \FF$ contains no neighbors of $v'$.
Since $a=1$, by Claim~\ref{44}, $\HH$ has a  strong component $\HH_1$ of size at least $5$. Since $\HH$ has no edges from $\FF$ to $V_1,V_2$ or $V_3$,
the vertex set of $\HH_1$ is $\FF$.
Hence every class in $\FF$ is reachable from $U.$
In particular, there is some vertex $u\in Q_1'(v')$ that is contained in some class $V_j$ and $V_j$ is reachable from $U$.
However,  as $a\le 2$,  $v'$ is ordinary and this contradicts Lemma~\ref{nB2}(b).

\medskip
{\em Case 1.3.2:} $|N(v')\cap C|=0$. Using the argument of Claim~\ref{q'}, we can show that as $q_1(v')\ge 5$, $$q_1'(v')=|Q_1'(v')|\ge q_1(v')-1.$$
So, there is at most one class in $\FF$ containing the vertex from $Q_1(v')\setminus Q_1'(v')$ (if exists), at most $3$ classes containing vertices from $N(v')\setminus Q_1(v')$, and hence there is a
 class $V_8\in\FF$ with $V_8\cap (N(v')-Q_1'(v'))=\emptyset$.

If $V_8$ has no neighbors of $v'$ at all, then we can denote the class as $U$ and apply the argument at the end of Case 1.3.1  again. 
Otherwise, by Lemma~\ref{nB2}(a), $V_8$ has at least two vertices
from $Q'_1(v')$, say
  $\{w_1,w_2\}\subseteq V_8\cap Q_1'(v')$.

Recall that $q'(v)\ge 6$. Without loss of generality, assume that $\{v_1,v_2,v_3\}\subseteq V_2\cap Q'(v)$.
Since $G$ is planar, it is $K_{3,3}$-free, so by symmetry we can assume that $w_1$ and $v_1$ are not adjacent in $G$. 
Take $W_1\in\FF_0\subseteq \FF$. By Claim~\ref{44}, $\HH$ contains a $W_1,V_8$-path $P$. Let $P=W_{1},W_{2},\ldots,W_{\ell}$ where $W_{\ell}=V_8$.
 For $j=1,2,\ldots,\ell-1$, let $u_j$ be a witness for the  arc  $W_{j}W_{{j+1}}$.

 Change $\varphi$ as follows. Move $v_0$ to $W_1$, then for $j=1,2,\ldots,\ell-1$, move $u_j$ from $W_{j}$ to $W_{{j+1}}$, move $u$ to $V_2$, move $v_1$ to $V_1$ and finally move $w_1$ to $V_1$. Class $V_1-\{v_0,u\}+\{v_1,w_1\}$ remains accessible, but now $V_2-v_1+u$ is also accessible witnessed by $v_2$, contradicting the maximality of $a$.

\medskip
{\bf Case 1.4:} $f=6$. Similarly to the argument of Case 2.1 in Section~\ref{semi}, suppose $B-F=V_2$. Then $Q'(v)\subseteq V_2$. We pick two arbitrary sets $X_1,X_2\in \FF$. Let $\mathcal{X}$ be the collection of classes in $\HH$ reachable from $X_1$ and $X_2$.
Then  $2\le |\mathcal{X}|\le 6$, since both $V_1$ and $V_2$ are not  reachable from $X_1$ and $X_2$. Consider these cases.

\medskip
{\em Case 1.4.1:} $2\le |\mathcal{X}|\le 4$. As in Case 1.2, we do not have a strong component of size at least $5$ in $\mathcal{H}$, and $V_1$ must form a  strong  component of size $1$ by itself. 
Then we have a contradiction to Claim~\ref{44}.

\medskip
{\em Case 1.4.2:} $|\mathcal{X}|=5$. Assume that $V_3\in\FF\setminus\mathcal{X}$. Since $V_1$ forms a strong component in $\HH$, by
 Claim~\ref{44},   $\HH[\mathcal{X}]$  is strongly connected.

As in Case 1.3, let $C=V_2\cup V_3$. Let $w'(v,u)=w(v,u)=\frac{1}{||V_1,u||}$ for each $u\in B\setminus C$ and $v\in V_1$, but for $u\in C$ and $v\in V_1$, let $w'(v,u)=\frac{1}{2}w(v,u)=\frac{1}{2||V_1,u||}$. For each $v\in V_1$, define 
$$w'(v)=\sum\nolimits_{uv\in E(G):u\in B}w'(v,u).$$
By definition $\sum_{v\in V_1}w'(v)=\sum_{v\in V_1,u\in B}w'(v,u)=6s$.

Since $|V_1|=s-1$, the average new weight of a vertex in $V_1$ is $6s/(s-1)>6$. Pick vertex $v'\in V_1$ with $w'(v')>6$. We claim that we can repeat the argument from Case 1.3 with $v'$ and $C$ defined identically. Thus, both $|N(v')\cap C|\ge 1$ and $|N(v')\cap C|=0$ would lead to a contradiction.

\medskip
{\em Case 1.4.3:} $|\mathcal{X}|=6$.
Since $X_1,X_2$ were picked arbitrarily,
\begin{equation}\label{sco}
    \mbox{\em $\HH[\mathcal{F}]$  is strongly connected.}
\end{equation}


We again use the  function $w(v,u)=\frac{1}{||V,u||}$ defined by~\eqref{defw}.
For each $v\in V_1$, define 
$$w_6(v)=\sum\nolimits_{uv\in E(G):u\in F\setminus V_3}w(v,u).$$
By definition $\sum_{v\in V_1}w_6(v)=\sum_{v\in V_1,u\in F\setminus V_3}w(v,u)=6s$. 
Since $a\le 2$, $u$ is ordinary.

Since $|V_1|=s-1$, the average weight of a vertex in $V_1$ is $6s/(s-1)>6$. Pick vertex $u\in V_1$ with $w_6(u)>6$. Notice that $w_6(v_0)<|N(v_0)\setminus Q'(v_0)|\le 2$, so $u$ and $v_0$ are distinct. Let $Q_6(u)=Q(u)\cap (F\setminus V_3)$ and $q_6(u)=|Q_6(u)|$. Denote $Q_6'(u)$ the set of vertices $w\in Q_6(u)$ that have non-neighbors in $Q_6'(u)-w$.

If $|N(u)\cap V_2|\ge 1$, then $q_6(u)\ge 6$. By Claim~\ref{q'}, there is at most $1$ class in $\FF$ containing vertex from $Q_6(u)\setminus Q_6'(u)$, and at most $1$ class containing vertices from $N(u)\setminus Q_6(u)$. Thus by Lemma~\ref{nB2}(a), $|\FF_0(u)|\ge 2$. Pick $z\in Q_6'(u)$ and the color class of $z$ is $W(z)$.
Then by~\eqref{sco}, $W(z)$ is reachable from $\FF_0(u)$, but this is a contradiction to Lemma~\ref{nB2}(b).

If $|N(u)\cap V_2|=0$, then $q_6(u)\ge 5$. Since $G$ is planar, it is $K_{3,3}$-free.
Then there is some $u'\in Q_6'(u)$ that is not adjacent to some $v_1\in V_2\cap Q'(v)$. Let the color class of $u'$ be $U'$. Take $W_1\in\FF_0\subseteq \FF$. By~\eqref{sco}, $\HH$ contains a $W_1,U'$-path $P$. Let $P=W_{1},W_{2},\ldots,W_{\ell}$ where $W_{\ell}=U'$.
 For $j=1,2,\ldots,\ell-1$, let $u_j$ be a witness for the  arc  $W_{j}W_{{j+1}}$.

 Change $\varphi$ as follows. Move $v_0$ to $W_1$, then for $j=1,2,\ldots,\ell-1$, move $u_j$ from $W_{j}$ to $W_{{j+1}}$, move $u$ to $V_2$, move $v_1$ to $V_1$ and finally $u'$ to $V_1$. Class $V_1-\{v_0,u\}+\{v_1,u'\}$ remains accessible, but now $V_2-v_1+u$ is also accessible, witnessed by some $v_2\in V_2\cap Q'(v)$ distinct from $v_1$.  This contradicts the maximality of $a$.
 
 \medskip
 {\bf Case 1.5:} $f=7$.
 There is some $u\in Q'(v_0)\cap W$ where $W\in\BB=\FF$.
 But with $W$ reachable from $\FF_0$, we have a contradiction to Lemma~\ref{nB2}(b).

\subsection{Proof of the case $a=2$}
Let $\AA=\{V_1,V_2\}$.
 For each $u\in B$ and $v\in V_2$,
let $w(v,u)=\frac{1}{\|V,u\|}$. Then for each $v\in V_2$, let
$$w_2(v)=\sum\nolimits_{uv\in E(G): u\in B}w(u,v). $$
By definition $\sum_{v\in V_2}w_2(v)=\sum_{v\in V_2,u\in B}w(v,u)=6s$.

There is a movable vertex $v'\in V_2$, and by~\eqref{V1}, $v'$ has no solo neighbors in $B$.  So $w_2(v')\le 8\cdot\frac{1}{2}=4$. Then there is a vertex $v_0\in V_2$ with $w_2(v_0)>6$. Notice that such $v_0$ should not be movable in $A$, so $|N(v_0)\cap B|\le 7$. To have $w_2(v_0)>6$, we need $q(v_0)=|Q(v_0)|\ge 6$. For each class $U\in \BB$, by Lemma~\ref{nB2}(a), if $|Q'(v_0)\cap U|\neq 0$, then $|Q'(v_0)\cap U|\ge 2$. Thus there are distinct color classes $U_1,U_2\in \FF_0(v_0)$. Let $\mathcal{U}$ be the collection of  classes in $\BB$ reachable from $\FF_0(v_0)$. Then as $a=2$, $2\le |\mathcal{U}|\le 6$. If $|\UU|=2$, then $|\AA\cup \UU|=4$, contradicting Claim~\ref{44}(i). For the same reason, $|\UU|\neq 4$. The remaining cases are as follows.

\medskip

{\bf Case 2.1:} $|\mathcal{U}|=3$, say $\mathcal{U}=\{U_1,U_2,U_3\}$. Let 
$U=\bigcup \mathcal{U}$, $\mathcal{W}=\BB\setminus\mathcal{U}=\{W_1,W_2,W_3\}$ and $W=\bigcup\mathcal{W}$.
For $i=1,2$, let $M_i$ denote the set of vertices in $V_i$ movable to $V_{r-i}$. 
If $m_2\ge m_1+2$, we move a vertex from $M_2$ to $V_1$, and relabel $V_1$ as $V_2'$ and $V_2$ as $V_1'$. 
Then there are $m_2-1$  vertices movable from $V_1'$ to $V_2'$ and $m_1+1$ movable from $V_2'$ to $V_1'$. So, we may assume
\begin{equation}\label{m21}
    m_2\le m_1+1.
\end{equation}

Since no vertex in $U_i$ is movable to $W_j$ for $1\le i,j\le 3$,
\begin{equation}\label{9s}
    |E_G(U,W)|\ge |\mathcal{U}||\mathcal{W}|s=9s.
\end{equation} 

Suppose that there are $k_2$ isolated vertices in $V_2$. Since $d(x)=\delta^*(G)\ge 2$, 
$$|E_G(M_2,B)|\ge 2(m_2-k_2).$$
By the symmetry between $\UU$ and $\mathcal{W}$, we can assume 
$$|E_G(M_2, U)|\ge m_2-k_2.$$

 If for every vertex $z\in U$, $|N(z)\cap (V_2\setminus M_2)|\ge 1$, then 
 \begin{equation}\label{mk2}
     |E_G(V_2,U)|=|E_G(V_2\setminus M_2,U)|+|E_G(M_2,U)|\ge 3s+m_2-k_2.
 \end{equation}

Otherwise there is a vertex $z\in U$ with $|N(z)\cap (V_2\setminus M_2)|=0$.
Since $z$ is not movable to $V_2$, it is adjacent to some vertices in $M_2$. 
If there is any $v_1\in M_1$ that is adjacent to $y$ and $v_2\in M_2$ that is not adjacent to $z$, then we can switch $v_1$ and $v_2$ to increase $|N(z)\cap M_2|$.
When $|N(z)\cap M_2|$ is maximized in this way, we either have $M_2\subseteq N(z)$ or $|N(z)\cap M_2|<m_2$ and $|N(z)\cap M_1|=0$.
In the latter case, we can switch $N(z)\cap M_2$ with equal number of vertices in $M_1$ since $m_2\le m_1+1$. 
The switched vertices remain movable to the other class. 
However, $z$ would become movable to $V_2$ , since $z$ has no neighbor in $V_2$ after the switch, a contradiction to the maximality of $a$.
So  $M_2\subseteq N(z)$, and hence $|N(z)\cap M_2|= m_2$. 
Let $Z$ be the collection of all such $z\in U$ that $|N(z)\cap (V_2\setminus M_2)|=0$ and $|N(z)\cap M_2|= m_2$. If $k=|Z|$, then  \begin{equation}\label{k1}
    |E_G(V_2,U)|\ge 3s-k+km_2,
\end{equation}where $k,m_2\ge 1$.

Now we count the edges between $V_2\cup W$ and $V_1\cup U$, with $k_2$ isolated vertices in $V_2$ removed, so there should be $8s-1-k_2$ vertices. 
 When $k=0$, we use the bounds~\eqref{9s}, ~\eqref{mk2}, $|E_G(V_2,V_1)|\ge s-m_2$ and $|E_G(W,V_1)|\ge 3s$ to derive that 
 $$   |E_G(V_2\cup W,V_1\cup U)|=|E_G(V_2\cup W,U)|+|E_G(V_2\cup W,V_1)|$$
 $$\ge  (9s+3s+m_2-k_2)+(3s+s-m_2)=16s-k_2>2(8s-1-k_2)-4.
$$ 

 When $k\ge 1$, we use the bound~\eqref{k1} instead of~\eqref{mk2}:
$$    |E_G(V_2\cup W,V_1\cup U)|=|E_G(V_2\cup W,U)|+|E_G(V_2\cup W,V_1)|$$
$$\ge  (9s+3s-k_2+km_2)+(3s+s-m_2)\ge 16s-k_2>2(8s-1-k_2)-4.$$
In both cases we get a contradiction to Lemma~\ref{pla}(b).

 \medskip
{\bf Case 2.2:}  $|\mathcal{U}|=5$. Denote $\{V_3\}=\BB\setminus\mathcal{U}$. We should have $Q'(v_0)\subseteq V_3$.
Consider a new weight function $w_2'$ where $w_2'(v,u)=w_2(v,u)=\frac{1}{||V_2,u||}$ for $v\in V_2$ and $u\in B\setminus V_3$, but for $u\in V_3$, $w_2'(v,u)=\frac{1}{2||V_2,u||}$. For each $v\in V_2$, define 
$$w_2'(v)=\sum\nolimits_{uv\in E(G):u\in B}w_2'(v,u).$$
By definition $\sum_{v\in V_2}w_2'(v)=\sum_{v\in V_2,u\in B}w_2'(v,u)=\frac{11}{2}s$. 
Note that $w_2'(v_0)\le 5\cdot\frac{1}{2}+2=\frac{9}{2}<\frac{11}{2}$. Hence there is some $u\in V_2-v_0$ with $w_2'(u)>\frac{11}{2}$.
Then $|Q(u)
\setminus V_3|\ge 5$, so $|N(u)\cap V_1|\ge 1$ and by Claim~\ref{q'}, $q'(u)\ge q(u)-1$.

{\em Case 2.2.1:} $|N(u)\cap U_3|=0$ for some $U_3\in \mathcal{U}$. We have $2$ classes $V_1,V_2$ not reachable from the other $6$ classes and  class $V_3$ not reachable from the remaining $5$ classes. 
So, $\{V_3\}$ forms a strong component in $\HH$. Hence by
 Claim~\ref{44},  $\HH(\mathcal{U})$ is strongly connected. 
Take some $z\in Q'(u)\setminus V_3$ with color class $W(z)$.
Then in particular $W(z)$ is reachable from $\FF_0(v_0)$, but this is a contradiction to Lemma~\ref{nB2}(b). 

{\em Case 2.2.2:} $|N(u)\cap U|\ge 1$ for every $U\in \mathcal{U}$. As $|N(u)\setminus V_1|\le 7$, $|Q(u)\setminus V_3|\ge 5$ and
$|Q'(u)\setminus V_3|\ge |Q(u)\setminus V_3|-1\ge 4$, at most $3$ classes in $\UU$ contain vertices in $N(u)\setminus Q'(u)$.
So, by Lemma~\ref{nB2}(a), some two classes in $\UU$ contain at least two vertices in $Q'(u)$ each; thus at least $4$ together.
For this to happen, we need $|N(u)\setminus V_1|= 7$,
 $|Q(u)\cap V_3|=0$ and $Q(u)\setminus Q'(u')\neq \emptyset$, say
 $z\in Q(u)\setminus Q'(u')$. Note that $|N(z)\cap (B\setminus V_3)|\ge 4$, and $z$ is adjacent to $u$ and some vertex in $V_1$ by definition.
  Thus $|N(z)\cap Q'(v)|\le 2$, so there are $v_1,v_2\in Q'(v)$ that are not adjacent to $z$.
Let the color class of $z$ be $W(z)$. Pick $U_1\in\FF_0(v_0)$. Then there is a $U_1,W(z)$ path $P$. Let $P=W_1,W_2,\dots,W_\ell$ where $W_1=U_1,W_\ell=W(z)$. 
For $j=1,2,\dots,\ell-1$, let $u_j$ be a witness for the  arc  $W_jW_{j+1}$.

Now we change $\varphi$ as follows. Move $v_0$ to $U_1$, then for $j=1,2,\dots,\ell-1$, move $u_j$ from $W_j$ to $W_{j+1}$, move $z$ and $v_1$ to $V_2$, and finally $u$ to $V_3$. Now $V_2-\{v_0,u\}+\{v_1,z\}$ remains accessible as both $v_0$ and $u$ are not movable, but now in addition $V_3-v_1+u$ becomes accessible with witness $v_2$, a contradiction to maximality of $a$.

\medskip
{\bf Case 2.3:} $|\mathcal{U}|=6$. As $q(v_0)\ge 6$, by Claim~\ref{q'}, there are $z\in Q'(v_0)$ with color class $W(z)$.
By the case $W(z)$ is reachable from $\FF_0(v_0)$, but this  contradicts Lemma~\ref{nB2}(b).

\subsection{Proof of the case $a=3$}

\subsubsection{Setup}\label{DP1}

Let ${\cal{A}}=\{V_1,V_2,V_3\}$,  ${{\cal{B}}}=\{W_1,\ldots, W_5\}$. 

We first  show  that  $V_2$ and $V_3$ can be chosen to be terminal classes.
Assume not, say  $V_2$ blocks $V_3$. 
Then there is a vertex $v_2\in V_2$ movable to $V_1$ and a vertex $v_3\in V_3$ movable to $V_2$.
We move $v_2$ to $V_1$, so $V_2$ becomes the smaller class. 
Notice that $v_2$ is movable from $V_1+v_2$ to $V_2-v_2$ and $v_3$ remains movable from $V_3$ to $V_2-v_2$. So in the new $\HH$, both $V_1+v_2$ and $V_3$ are terminal and $|V_2-v_2|=s-1$ while $|V_1+v_2|=s$. Thus, we can assume that both $V_2$ and $V_3$ are terminal classes.

\begin{lem}\label{lm5}
    For $2\leq j\leq 3$, each solo vertex $v\in V_j$  has neighbors in  $V_1$ and $V_{5-j}$, and thus is ordinary.
\end{lem}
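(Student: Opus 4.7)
The plan is proof by contradiction. Supposing that some solo vertex $v \in V_j$ (with solo neighbor $u \in W \in \mathcal{B}$) is missing a neighbor in $V_1$ or in $V_{5-j}$, I will construct an almost-equitable coloring $\varphi'$ whose accessible set $\mathcal{A}'$ has size at least $4$, contradicting the maximality of $a = |\mathcal{A}| = 3$. By symmetry between $V_2$ and $V_3$, we may assume $j=2$.

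First, I record a structural observation. Since $|\mathcal{A}|=3$ and any path ending at $V_1$ in $\HH$ must stay inside $\mathcal{A}$ (all intermediate classes on such a path are themselves accessible), the terminality of $V_2$ forces the path from $V_3$ to $V_1$ to be a direct arc in $\HH$; symmetrically $V_2 \to V_1$ is also a direct arc. Fix witnesses $v_2 \in V_2$ and $v_3 \in V_3$ for these arcs (each has no neighbor in $V_1$).

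To prove that $v$ has a neighbor in $V_1$, suppose not and let $\varphi'$ be obtained from $\varphi$ by moving $v$ from $V_2$ to $V_1$. This is proper and almost equitable with small class $V_2-v$. In $\HH'$ the arcs $V_1+v \to V_2-v$ (via $v$) and $W \to V_2-v$ (via $u$) are immediate. If some witness of $V_3 \to V_1$ is not adjacent to $v$, it witnesses $V_3 \to V_1+v$ in $\HH'$, adding $V_3$ as a fourth accessible class. The main obstacle is the subcase in which every vertex of $V_3$ non-adjacent to $V_1$ lies in $N(v)$; in that event I will refine the modification, either by swapping the roles of $v$ and a suitable witness $v_3'$, or by propagating the recoloring along a path in $\HH[\mathcal{B}]$ in the spirit of Lemma~\ref{nB2}(b), to again produce a fourth accessible class.

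To prove that $v$ has a neighbor in $V_3$, suppose not and define $\varphi'$ by the simultaneous moves: $v_3$ from $V_3$ to $V_1$ (proper since $v_3 \not\sim V_1$) and $v$ from $V_2$ to $V_3$ (proper since $v \not\sim V_3$, so in particular $v \not\sim v_3$). The resulting coloring is almost equitable with small class $V_2-v$, and the arcs $V_3-v_3+v \to V_2-v$ (via $v$) and $W \to V_2-v$ (via $u$) are immediate. Moreover $v_3 \in V_1+v_3$ has no neighbor in $V_3-v_3+v$ ($V_3$ is independent and $v \not\sim v_3$), so $V_1+v_3 \to V_3-v_3+v$ is an arc in $\HH'$; composing with the arc into $V_2-v$ shows $V_1+v_3$ is a fourth accessible class, the contradiction.

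Finally, ordinariness of $v$ is immediate: once $v$ is known to have a neighbor in $V_1$, it cannot be a witness for $V_2 \to V_1$, so the witness $v_2$ lies in $V_2 - v$ and is movable to $V_1 \in \mathcal{A}$, fulfilling the definition of ordinary.
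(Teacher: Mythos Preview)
Your argument for the case ``$v$ has no neighbor in $V_{5-j}$'' is correct and in fact cleaner than the paper's: by moving $v_3$ to $V_1$ and $v$ to $V_3$ you directly exhibit four accessible classes $\{V_2-v,\,V_3-v_3+v,\,V_1+v_3,\,W\}$, whereas the paper obtains only three and then invokes Claim~\ref{44} (the strong-component structure of $\HH$) to force a contradiction. Your ordinariness deduction at the end is also fine.

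However, your treatment of the case ``$v$ has no neighbor in $V_1$'' is not a proof. You correctly identify the obstacle---after moving $v$ to $V_1$, every witness $v_3'$ of $V_3\to V_1$ might be adjacent to $v$, so $V_3\to V_1+v$ need not be an arc---but then you only promise to ``refine the modification, either by swapping the roles of $v$ and a suitable witness $v_3'$, or by propagating the recoloring along a path in $\HH[\mathcal{B}]$.'' Neither refinement is carried out, and it is not clear that either can be made to work: swapping with a witness $v_3'\in V_3$ does not obviously preserve the solo relationship with $u$, and propagating along $\HH[\mathcal{B}]$ in the spirit of Lemma~\ref{nB2}(b) requires $v$ to have an empty class in $\mathcal{B}$, which is not given here.

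The paper closes this gap by appealing to Claim~\ref{44}. After moving $v$ to $V_1$, the new accessible set is exactly $\{V_2-v,\,V_1+v,\,W\}$ (a fourth class would contradict maximality of $a$). Since no class of $\mathcal{B}$ had an arc into $V_3$ originally and $V_3$ is unchanged, and since any arc from $V_3$ into the accessible set would make $V_3$ accessible, the class $V_3$ sits in a strong component of size~$1$ in the new $\HH$. This is incompatible with Claim~\ref{44}, which forces either a strong component of size $\ge 5$ or the partition $3,3,2$. You should either supply the missing refinement explicitly or, more simply, invoke Claim~\ref{44} as the paper does.
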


\begin{proof}
    Since both $V_2$ and $V_3$ are terminal classes, without loss of generality we can assume that there is a movable $v\in V_2$ that has a solo neighbor $u\in W(u)\in\BB$,
     and $v'\in V_3$ witnesses the directed edge $V_3V_1$ in $\HH$.

    If $v$ is movable to $V_3$, then we move $v'$ to $V_1$ and $v$ to $V_3$. In the new coloring $\varphi'$,
     $V_2-v$ as the smaller class,  $V_3-v'+v$ and $W(u)$ are accessible with regard to $V_2-v$. No other class in $\BB$ is accessible otherwise we get a larger $a$. $V_1+v'$ should not be in a strong component with classes other in $\HH$ since $V_1$ is not. However, if $V_1+v'$ can reach $V_2-v$, we also get a larger $a$. Thus $V_1+v'$ must be in a strong component by itself in the auxiliary digraph regarding the new coloring, but this contradicts Claim~\ref{44} as we would have no strong component of size $5$ and one strong component of size $1$.

    If $v$ is movable to $V_1$, then we  move  $v$ to $V_1$. Now we take $V_2-v$ as the smaller class, then $V_1$ and $W(u)$ are accessible with regard to $V_2-v$. Again, no other class in $\BB$ is accessible otherwise we get a larger $a$. $V_3$ would be in a strong component in the auxiliary digraph regarding the new coloring, but this contradicts Claim~\ref{44} as we would have no strong component of size $5$ and one strong component of size $1$.
\end{proof}

Denote the size of a largest strong component of $\HH$ contained in $\BB$   by $b_0$. By Claim~\ref{44},  either $b_0=3$ or $b_0=5$.

\medskip

\textbf{Case 3.1:} $b_0=3$.  By Claim~\ref{44}, we may assume that the vertex sets of strong components of $\HH$ contained in $\BB$ are $\BB_1=\{W_1,W_2\}$ and $\BB_2=\{W_3,W_4,W_5\}$. Recall that $V_0$ denotes the set of isolated vertices in $G$, and $n_0=|V_0|$.
 By the definition of $B$, $V_0\subset A$. Let $n'_0=|V_0-V_1|$.

Consider the following discharging procedure DP.

At the beginning, each edge of $G-x$ has charge $1$, so the sum of all charges is $|E(G-x)|$. Then
each edge $e=uv\in E(G-x)$ shares its charge among its ends according to the rules below.
    
 (R1)     if $v\in V_1$, then the edge sends all charge to $u$;

 (R2)      if $v\in A-V_1$ and $u$ is its solo neighbor in $B$, then the $e$ sends all charge to $u$;

 (R3)  in all other cases, $e$ sends $\frac{1}{2}$ to each endpoint.

So, denoting the charge of a vertex $v\in V(G)$ by $ch(v)$, we have 
\begin{equation}\label{ch}
    \sum\nolimits_{v\in V(G)}ch(v)=|E(G-x)|.
\end{equation}

\medskip
If a non-isolated vertex $v\in A-V_1$  has a solo neighbor in $B$, then by Lemma~\ref{lm5}
it has a neighbor in each of the other two classes in $\AA$, thus by rules (R2) and (R3) its charge  is  at least $\frac{1}{2}+1=\frac{3}{2}$. 
If this non-isolated $v\in A-V_1$ has no solo neighbors, then again by (R3) or (R2), $v$  receives  charge  at least $\frac{1}{2}$ from each incident edge, and hence $ch(v)\geq \frac{3}{2}$.

Each vertex $u\in B$  receives at least $3$ from  the edges connecting $u$ with $A$.
Since $\BB_1$ and $\BB_2$ are vertex sets of disjoint strong components of $\HH$, at least $s$ edges 
connect any class in $\BB_1$ with any class in $\BB_2$. Hence the vertices of $B$ receive  total charge at least $6s$ from these edges. Thus,
$$\sum\nolimits_{v\in V(G)}ch(v)\geq (2s-1-n'_0)\cdot\frac{3}{2}+5s\cdot 3+6s=24s-n'_0-\frac{3}{2}>3(8s-1-n'_0)-6.$$
Together with~\eqref{ch}, this contradicts Lemma~\ref{pla}(b).
\medskip

\textbf{Case 3.2: }$b_0=5$. For each $u\in B$ and $v\in V_2$,
define the weight $w_3(v,u)=\frac{1}{\|V,u\|}$. Then for each $v\in V_2$, define
$$w_3(v)=\sum\nolimits_{uv\in E(G): u\in B}w_3(v,u). $$
By definition, $\sum_{v\in V_2}w_3(v)=\sum_{v\in V_2,u\in B}w_3(v,u)=5s$.

Since $V_2$ is accessible, there is some $v\in V_2$ movable to $V_1$. 
Then by Lemma~\ref{lm5}, $v$ has no solo neighbor, so $w_3(v)\le 8\cdot\frac{1}{2}=4$. 
Thus there is some $v'\in V_2$ with $w_3(v')>5$.

Now we know that $v'$ has a neighbor in $V_1$ and a neighbor in $V_3$, so $|N(v')\cap B|\le 6$.
In order to achieve $w_3(v')>5$, we need $q(v')\ge 5$, and hence by Claim~\ref{q'}, $q'(v')\ge 4$.  By Lemma~\ref{nB2}(a), each neighbor of $v'$ in $Q'(v')$ must be in a class containing some other neighbor of $v'$, so there is some class $W'\in\BB$ that is not adjacent to $v'$. 
Then we pick some $z\in Q'(v')$ with color class $W(z)$.
By the case, $W(z)$ is reachable from $W'$, but by Lemma~\ref{lm5}, $v'$ is ordinary, and this leads to a contradiction to Lemma~\ref{nB2}(b).

\subsection{Proof of the case $a=5$}

In this case, since $d(x)\leq 5$ and $x$ has a neighbor in each class of $\AA$, we have $d(x)=5$ and $x$
  has no neighbors in $B$.
First,
we take a closer look at  $\HH[\AA]$. 

We call $\HH[\AA]$ \emph{nice}, if every accessible class other than $V_1$ blocks at most one accessible class. All $5$-vertex nice in-trees rooted at $V_1$ are listed in Figure~\ref{fig:nice}.
The two $5$-vertex in-trees rooted at $V_1$ with $d^-_{\HH[\AA]}(V_1)\geq 2$ that are not nice  are listed in Figure~\ref{fig:notnice}.

\begin{figure}
 \centering
 \includegraphics[scale=0.8]{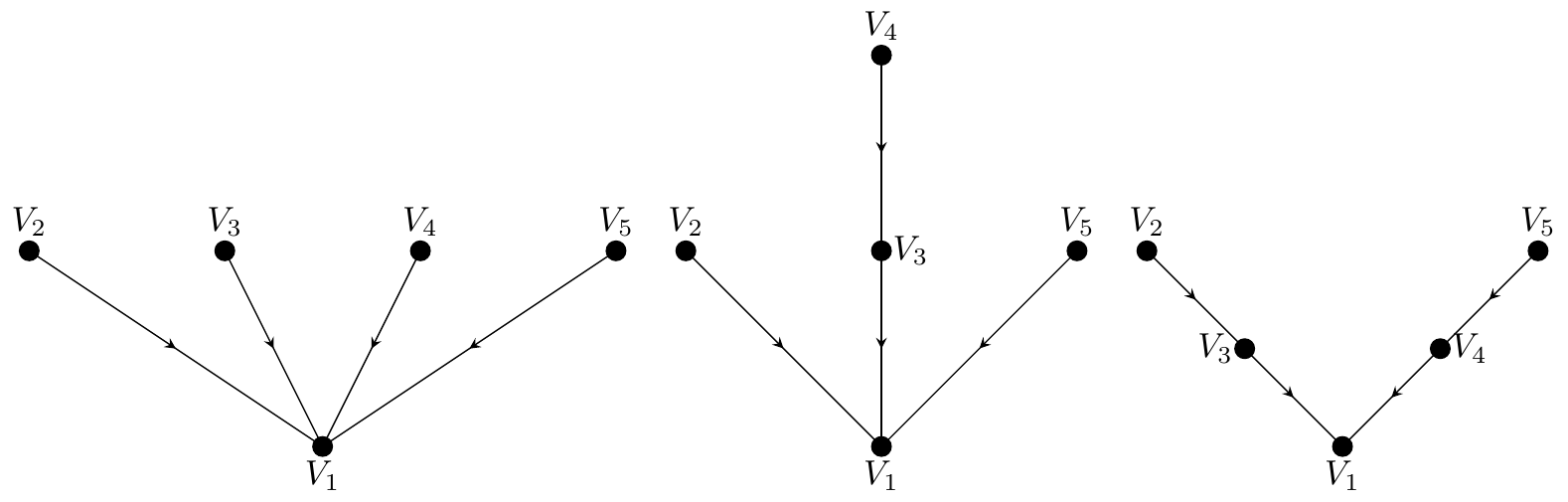}
 \caption{Nice digraphs: $\protect\overrightarrow{K_{4,1}}, \,\protect\overrightarrow{T_3}$ and $\protect\overrightarrow{T_{2,2}}$.}
 \label{fig:nice}
 \end{figure}

\begin{lem}\label{nice}
    If $a=5$, then we can choose an almost equitable coloring $\varphi$ so that $\HH[\AA]$ is nice.
\end{lem}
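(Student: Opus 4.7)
The plan is to choose $\varphi$ extremally with respect to a secondary criterion beyond maximizing $|\AA|=5$: among all almost equitable colorings achieving $a=5$, I would pick $\varphi$ so that the digraph $\HH[\AA]$ has the maximum number of arcs, and subject to that, so that the in-degree $d^-_{\HH[\AA]}(V_1)$ is as large as possible. I will then argue that for such $\varphi$ the digraph $\HH[\AA]$ must be one of the three nice in-trees in Figure~\ref{fig:nice}.

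First, I rule out in-trees with $d^-_{\HH[\AA]}(V_1)=1$. If the unique arc into $V_1$ were $V_2\to V_1$ witnessed by $v_2$, I slide witnesses one step toward $V_1$ as in the proof of Lemma~\ref{nB2}(b): move $v_2$ into $V_1$, then push each witness along the unique directed chain from the deepest accessible class toward $V_1$ one step forward. The small class in the resulting almost equitable coloring $\varphi'$ is $V_2-v_2$; the vertex $v_2$ itself witnesses a new arc from $V_1+v_2$ into $V_2-v_2$, while each arc of $\HH[\AA]$ that formerly entered $V_2$ now enters $V_2-v_2$ via the same witness. So $\HH'[\AA']$ has at least as many arcs as $\HH[\AA]$ and strictly larger in-degree at the new root, contradicting the tie-breaking. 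Hence $d^-_{\HH[\AA]}(V_1)\geq 2$.

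It then remains to exclude the two in-trees of Figure~\ref{fig:notnice}. In each, some class $X$ adjacent to $V_1$ blocks two other classes, so a chain of length at least two hangs off $X$. I perform the analogous chain shift: move the witness of $X\to V_1$ into $V_1$, then push the witness of each arc terminating at $X$ into $X$ minus its departed vertex, and so on down the chain. A re-analysis of the resulting $\HH'[\AA']$ shows that the new small class at the bottom of the chain acquires an additional arc at a neighbor of $V_1$ — because the vertex just moved into $V_1$ witnesses an arc back — while every arc of $\HH[\AA]$ either survives or is replaced by an analogue witnessed by the same vertex. The arc count of $\HH'[\AA']$ therefore strictly exceeds that of $\HH[\AA]$, contradicting the choice of $\varphi$.

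The principal obstacle will be the careful bookkeeping after each shift. When $v_2$ moves into $V_1$, an arc such as $V_3\to V_1$ witnessed by some $v_3$ disappears in $\HH'[\AA']$ unless $v_3\not\sim v_2$, and similar concerns arise for arcs along the shifted chain. Controlling this requires either re-picking a witness inside $V_3$, using that $V_3$ is terminal and contains multiple candidate movable vertices in the spirit of Lemma~\ref{nB2}, or invoking a planarity-based edge count in the style of Claim~\ref{44} to rule out the pathological adjacency pattern in which too many arcs fail simultaneously. Observing that no vertex of $B$ ever moves during any of these shifts guarantees that $|\AA'|=5$ is preserved throughout, so the extremality of $\varphi$ really is contradicted at the end.
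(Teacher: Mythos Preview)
Your extremality scheme does not close. The central claim --- that after moving $v_2$ into $V_1$ the new digraph $\HH'[\AA']$ has at least as many arcs as $\HH[\AA]$ --- is false in general. If $v_2$ happens to be the \emph{only} witness for some arc $V_2\to V_j$ with $j\in\{3,4,5\}$, then after the move that arc becomes $V_1{+}v_2\to V_j$, and if $V_1\to V_j$ was already present these two merge into one; similarly, if $V_1\to V_2$ was present then the ``new'' arc $V_1{+}v_2\to V_2{-}v_2$ gains you nothing. So the arc count can strictly drop, and then your tie-breaker never engages. (Incidentally, your description in the $d^-(V_1)=1$ case is internally inconsistent: if you ``push each witness along the chain'' the small class ends up at the far end, not at $V_2-v_2$; and in the not-nice trees $T'_{3,1}$ and $T_{3,1}$ the tree \emph{branches} at the blocking vertex, so ``push each arc terminating at $X$ into $X$'' is ill-defined --- you can only move one vertex in.)

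The real difficulty, which your final paragraph names but does not resolve, is not bookkeeping of total arcs; it is that after you move $v_3$ (the witness of $V_3\to V_1$) into $V_1$ in the $T'_{3,1}$ or $T_{3,1}$ case, the class $V_2$ may have \emph{no} out-arc whatsoever into $\{V_1{+}v_3,V_3{-}v_3,V_4,V_5\}$ and hence may become inaccessible, so $a$ would drop below $5$. The paper handles exactly this: if $V_2$ retains some out-arc into that set, the new $\HH[\AA]$ is nice and we are done; if not, then every vertex of $V_2$ has a neighbor in each of those four classes, whence $|E(V_2,V_1\cup V_3\cup V_4\cup V_5)|\ge 4s$, and together with the $12s$ edges from $B$ this violates Lemma~\ref{pla}(b). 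That planarity count is the actual content of the lemma for the two bad in-trees; no extremality on $\HH[\AA]$ will substitute for it.
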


\begin{proof}
    Note that if $d^-_{\HH[\AA]}(V_1)\ge 3$, then  $\HH[\AA]$ is nice. So, we have the following cases.

    {\bf Case 1:} $d^-_{\HH[\AA]}(V_1)=2$. Then $\HH[\AA]$ contains one of the two digraphs in Figure~\ref{fig:notnice}.

\begin{figure}
 \centering
 \includegraphics{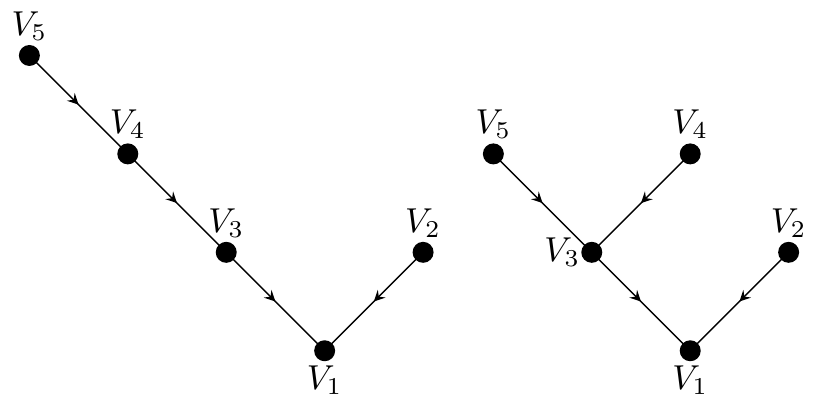}
 \caption{Digraphs with $d^-_{\HH[\AA]}(V_1)\ge 2$ that are not nice: $\protect\overrightarrow{T_{3,1}}$ and $\protect\overrightarrow{T'_{3,1}}$.}
 \label{fig:notnice}
 \end{figure}

  {\em Case 1.1:} $\HH[\AA]$ contains $\protect\overrightarrow{T'_{3,1}}$. 
   Let $\varphi'$ be obtained from $\varphi$ by moving  a witness $v_3$ of the arc $V_3V_1$ into $V_1$. 
  Then $V_3-v_3$ is the new small class, and the arcs $V_4(V_3-v_3)$, $V_5(V_3-v_3)$ and $(V_1+v_3)(V_3-v_3)$ are present 
   in the new $\HH$. So, if at least one of $V_1+v_3,V_3-v_3,V_4,V_5$ is an out-neighbor of $V_2$ in the new $\HH$, then
the new $\HH[\AA]$ is nice.   Otherwise, $|E(V_2,V_1\cup V_3\cup V_4\cup V_5)|\geq 4s$, and hence
$$|E(V_2\cup V_6\cup V_7\cup V_8,V_1\cup V_3\cup V_4\cup V_5)|\geq 4s+ 4|V_6\cup V_7\cup V_8|\geq 16s= 2n,
$$
contradicting Lemma~\ref{pla}(b).

 {\em Case 1.2:} $\HH[\AA]$ contains $\protect\overrightarrow{T_{3,1}}$. If  $V_5V_2\in E(\HH)$, then $\HH[\AA]$ is nice, a contradiction. 
 So, $|E(V_2, V_5)|\geq s$.
 Again, let $\varphi'$ be obtained from $\varphi$ by moving a witness $v_3$
 of $V_3V_1$ into $V_1$. Again, $V_3-v_3$ is the new small class, and the arcs $V_4(V_3-v_3)$, $V_5(V_3-v_3)$ and $(V_1+v_3)(V_3-v_3)$ are present 
   in the new $\HH$. So, if  one of $V_1+v_3,V_3-v_3$ is an out-neighbor of $V_2$ in the new $\HH$, then
$\HH[\AA]$ is nice, and if $V_2V_4\in E(\HH)$, then we get Case 1.1. Otherwise, as in Case 1.1, 
$$|E(V_2\cup V_6\cup V_7\cup V_8,V_1\cup V_3\cup V_4\cup V_5)|\geq s+3s+ 4|V_6\cup V_7\cup V_8|\geq 16s= 2n,
$$
contradicting Lemma~\ref{pla}(b).
    
  {\bf Case 2:} $d^-_{\HH[\AA]}(V_1)=1$. Suppose $V_2V_1\in E(\HH)$ and $v_2\in V_2$ a witness of this arc.
  Since each vertex in $\AA$ is accessible, $d^-_{\HH[\AA]}(V_2)\geq 1$, say $V_3V_2\in E(\HH)$.
  Let $\varphi'$ be obtained from $\varphi$ by moving $v_2$ into $V_1$. Then $V_2-v_2$ is the new small class, all classes in
  $\AA$ are still accessible, and $V_2-v_2$ has at least two in-neighbors in the new $\HH$. So either the new $\HH$ is nice or
  we have Case 1.
\end{proof}


\begin{lem}\label{unmovable}
    If $\HH[\AA]$ is nice, then each solo vertex $v\in V_i\in \AA-V_1$ has a neighbor in each
    class of $\AA-V_i$. In particular, $v$
    is ordinary.
\end{lem}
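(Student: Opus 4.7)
The plan is to argue by contradiction with the maximality of $a$. Suppose some solo vertex $v \in V_i \in \AA - V_1$, with solo neighbor $u \in W(u) \in \BB$, has no neighbor in some class $V_k \in \AA - V_i$. I would exhibit an almost equitable coloring in which at least six classes are accessible, contradicting $|\AA| = 5$. The second sentence of the lemma then follows at once: once $v$ has a neighbor in every $V_j \in \AA - V_i$, the vertex $v$ cannot witness any arc out of $V_i$ in $\HH[\AA]$; since $V_i \in \AA$ is accessible, some $v' \in V_i - v$ must witness such an arc, so $v$ is ordinary.

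In the subcase $V_k = V_1$, I would simply move $v$ into $V_1$, producing an almost equitable coloring with new small class $V_i - v$. Two new arcs appear in the new auxiliary digraph: $V_1 + v \to V_i - v$ (witnessed by $v$, which has no neighbor in its old class) and $W(u) \to V_i - v$ (witnessed by $u$, whose only neighbor in $V_i$ was $v$). For each of the remaining three classes $V_j \in \AA - \{V_1, V_i\}$, the directed path from $V_j$ to $V_1$ in the nice in-tree $\HH[\AA]$ has length at most two, and I would extend it by the new arc $V_1 + v \to V_i - v$ after checking that the final witness on the path is not adjacent to $v$. The bound $d(v) \le 8$ together with the fact that $v$ already has neighbors in $u$ and, by the nice structure of $\HH[\AA]$, in every other class of $\AA - V_i$, leaves enough room for an alternative witness whenever one is threatened.

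In the subcase $V_k \ne V_1$, since $V_k$ is accessible there is a path $V_k = V_{k_0} \to V_{k_1} \to \ldots \to V_{k_t} = V_1$ in $\HH[\AA]$. I would first shift witnesses along this path by moving $v_{k_j}$ from $V_{k_j}$ to $V_{k_{j+1}}$ for $j = 0, \ldots, t-1$, making $V_k$ the unique size-$(s-1)$ class; then moving $v$ from $V_i$ into $V_k$ gives an almost equitable coloring whose small class is $V_i - v$. The same arcs from $v$ and $u$ witness that the modified $V_k + v$ and $W(u)$ are accessible, and the nice structure (bounded branching in the in-tree) ensures that the shifts do not destroy paths from the remaining classes of $\AA$ to $V_i - v$, so once more the new accessible set has size at least six.

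The main obstacle will be tracking which arcs of the new auxiliary digraph survive the several vertex moves, since any witness lying in $N(v)$ or in some $N(v_{k_j})$ might be invalidated. I expect to handle this by inspecting each of the three nice shapes from Figure~\ref{fig:nice} separately, and, in borderline cases where several witnesses are simultaneously destroyed, by counting edges across the corresponding bipartition and contradicting Lemma~\ref{pla}(b), exactly as in the earlier cases of the proof of Theorem~\ref{pla8}.
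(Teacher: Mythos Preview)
Your approach diverges from the paper's and has a genuine gap.

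The paper's key idea, which you miss, is specific to the case $a=5$: since $d(x)\le 5$ and $x$ has a neighbor in each of the five classes of $\AA$, the vertex $x$ has \emph{no neighbors in $B$}. So one can place $x$ directly into the class $W(u)$, then move $u$ to $V_i$, $v$ to $V_j$, and shift witnesses along a $V_j,V_1$-path in $\HH-V_i$. This yields an equitable coloring of $G$ itself, not merely an almost equitable coloring of $G-x$ with a larger accessible set. This disposes of the case where $V_i$ does not block $V_j$ in one clean stroke. Only the residual case ``$V_i$ blocks $V_j$'' (forcing $V_j$ unique by niceness, and hence $v$ adjacent to every class in $\AA-V_i-V_j$) requires the swap-plus-edge-count argument you anticipate.

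Your route, by contrast, tries to manufacture a sixth accessible class in every situation, and the argument breaks. In the subcase $V_k=V_1$ you write that ``$v$ already has neighbors in $u$ and, by the nice structure of $\HH[\AA]$, in every other class of $\AA-V_i$''. That is exactly the conclusion of the lemma; it is not available here, and niceness of $\HH[\AA]$ says nothing about the neighborhood of a particular vertex $v$. Without it your degree count for finding alternative witnesses collapses: the unique witness of some arc $V_j\to V_1$ may well be adjacent to $v$, and there is no reason $V_j\to V_i-v$ should hold either. In the subcase $V_k\ne V_1$, the shift along the $V_k,V_1$-path may pass through $V_i$, pushing a new vertex $v_k$ into $V_i$ before you remove $v$; then $u$ need not witness $W(u)\to V_i'$ any longer, since $u$ could be adjacent to $v_k$. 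You flag arc-survival as ``the main obstacle'' but the proposed remedies (case-by-case inspection and edge counting) are not carried out, and the first subcase already rests on a false premise.
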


\begin{proof} Suppose 
     $v\in V_i\in \AA-V_1$  has a solo neighbor $u\in W\in \BB$ and has no neighbor in $V_j$ for some $V_j\in \AA-V_i$.
   If $\HH-V_i$ has
    a $V_j,V_1$-path $P$, say  $P=W_1,W_2,\dots,W_\ell$, where $W_1=V_j$, $W_\ell=V_1$
   and $w_h$ is a witness of $W_hW_{h+1}$ for $h=1,\ldots,\ell-1$, then we change $\varphi$ as follows. 
   Since $x$ has no neighbors in $B$, move it into the class of $u$, then move $u$ to $V_i$, $v$ to $V_j=W_1$, 
   and then
   for $h=1,2,\dots,\ell-1$, move $w_h$ from $W_h$ to $W_{h+1}$. 
   This would yield an equitable coloring on $G$, so assume that $\HH-V_i$ has no such path.

    This means that $V_i$ blocks $V_j$. Since $\HH[\AA]$ is nice, $V_j$ is the unique vertex in $\HH[\AA]$
    blocked by $V_i$, and $v$ has neighbors in each  class of $\AA-V_j-V_i$.
    Since $V_i$ is the only out-neighbor of $V_j$ in $\HH[\AA]$, we have $|E_G(V_j,A-V_i)|\geq 3s$.
    
    
    If $u$ is not adjacent to some vertex $v'$ that is movable from $V_j$ to $V_i$, then we can move $v$ to $V_j$ and  $v'$
    to $V_i$. Since $\HH[\AA]$ is nice, all classes of $\AA$ remain accessible, but now the class of $u$ also becomes accessible,
    contradicting the maximality of $a$.
    Thus $u$ is adjacent to all vertices movable from $V_j$ to $V_i$. Let $M$ be the set of these movable vertices and $m=|M|$.

    Now we count the edges connecting $A\setminus V_j-v+u$ and $B\cup V_k+v-u$. 
    Since $v$ is adjacent to each class in $\AA-V_j-V_i$ and to $u$,  at most $4$ edges connect $v$ to $B-u$. 
    No vertex in $B-u$ is movable to $A-V_j$, thus 
    \begin{equation}\label{lll}
        |E_G(B+v-u,A-V_j-v+u)|\ge 4(3s-1)-4+3+1=12s-4.
    \end{equation}
    Since $|E_G(V_j,A-V_i-V_j)|\geq 3s$, we get
    \begin{equation}\label{ll}
      |E_G(V_j, A-V_j-v+u)|=|E_G(V_j,A-V_i-V_j)|+|E_G(V_j,V_i+u)|
      \ge 3s+s-m+m=4s.  
    \end{equation}   
    Summing~\eqref{lll} with~\eqref{ll} gives $16s-4$ edges in a bipartite planar graph with $8s-1$ vertices, a contradiction to Lemma~\ref{pla}(b).
\end{proof}

Suppose now that $\varphi$ satisfies
 Lemma~\ref{nice}. Recall that $V_0$ denotes the set of isolated vertices in $G$, and $n_0=|V_0|$.
 By the definition of $B$, $V_0\subset A$. Let $n'_0=|V_0-V_1|$.
 Consider the  discharging procedure DP described in Case 3.1 of Subsection~\ref{DP1}.
We  will show that the new charges of vertices of $G$ satisfy
\begin{equation}\label{charge}
\mbox{\em $ch(u)\geq 5$ for each $u\in B$, and $ch(v)\geq 2.5$ for each $v\in A-V_1-V_0$,   }    
\end{equation}
which would imply that 
$$E(G-x)=\sum_{w\in V(G)-V_1-V_0}ch(w)\geq 5(3s)+2.5(4s-n'_0)=25s-2.5n'_0>3(|V(G)|-n_0).$$ 
Together with~\eqref{ch}, this contradicts Lemma~\ref{pla}(a).
Thus, it remains to prove~\eqref{charge}.

For   $u\in B$ and $V_i\in \AA$, $u$ has a neighbor in $V_i$. If it is a unique neighbor of $u$ in $V_i$,
then $u$ gets $1$ from $uv$ by (R2), otherwise  at least two edges connect $u$ to $V_i$ and $u$ gets $1/2$ from each of them. This proves the first part of~\eqref{charge}.

If $v\in A-V_1$  has a solo neighbor in $B$, then by Lemma~\ref{unmovable}, it has an edge to $V_1$ (from which it gets $1$ by (R1)) and at least $3$ edges to other classes in $\AA$ (from each of which it gets $1/2$ by (R3)). Thus in this case the second part of~\eqref{charge} holds.

Finally, if $v\in A-V_1-V_0$  has no solo neighbors in $B$, then  $v$ receives
by (R3) a charge of  $\frac{1}{2}$ from each incident edge, and by the case, there are at least $5$ of them. This 
proves~\eqref{charge} and hence finishes the proof of  Theorem~\ref{pla8}.
\qed

{\bf Acknowledgment.} We thank Dan Cranston, Hal Kierstead, Kittikorn Nakprasit and an anonymous referee for helpful comments.

\end{document}